\documentclass[12pt]{amsart}

\usepackage{amssymb,amsthm}

\newtheorem{theorem}{Theorem}[section]
\newtheorem{corollary}[theorem]{Corollary}

\newtheorem{lemma}[theorem]{Lemma}


\raggedbottom

\def\irr#1{{\rm  Irr}(#1)}
\def\irri#1#2{{\rm Irr}_{#1} (#2)}

\def\ker#1{{\rm ker} (#1)}

\def\phi{{\varphi}}

\newcommand{\nli}[2]{{\rm nl}_{#1} (#2)}

\newcommand{\QQ}{{\mathbb Q}}
\newcommand{\RR}{{\mathbb R}}
\newcommand{\CC}{{\mathbb C}}
\newcommand{\acdd}[2]{{\rm acd}_{#1} (#2)}
\newcommand{\acd}[1]{\acdd {}{#1}}

\begin{document}

\title[Average character Degrees]{Variations on average character degrees and $p$-nilpotence}
\author[Mark L. Lewis]{Mark L. Lewis}

\address{Department of Mathematical Sciences, Kent State University, Kent, OH 44242}
\email{lewis@math.kent.edu}

\subjclass[2010]{ 20C15.}
\keywords{character degrees, $p$-nilpotence, averages }

\begin{abstract}
We prove that if $p$ is an odd prime, $G$ is a solvable group, and the average value of the irreducible characters of $G$ whose degrees are not divisible by $p$ is strictly less than $2(p+1)/(p+3)$, then $G$ is $p$-nilpotent.  We show that there are examples that are not $p$-nilpotent where this bound is met for every prime $p$.  We then prove a number of variations of this result.
\end{abstract}

\maketitle

\section{Introduction}

Throughout this paper, $G$ will be a finite group.  We write $\irr G$ for the set of irreducible characters of $G$.  The average value of the degrees of the irreducible characters can be written as
$$
\acd G = \frac {\sum_{\chi \in \irr G} \chi (1)}{|\irr G|}.
$$
So far as we can tell, $\acd G$ was first studied by Magaard and Tong-Viet in Theorem 1.4 of \cite{MaTV} where they showed that if $\acd G \le 2$, then $G$ is solvable.  Isaacs, Loukaki, and Moret\'o improved this in Theorem A in \cite{ILM} to show that if $\acd G \le 3$, then $G$ is solvable.  This question was finally settled by Moret\'o and Hung in Theorem A of \cite{MoNg} where they showed that if $\acd G < 16/5$, then $G$ is solvable.  This bound is best possible since $\acd {A_5} = 16/5$.  Given the success of using the average of the degrees of the characters to determine solvability, this quantity has been used to determine other properties.  Qian has studied $r$-solvability in terms of $\acd G$ in \cite{qian}.  Isaacs, Loukaki, and Moret\'o considered in \cite{ILM} the properties of supersolvability and nilpotence in terms of $\acd G$.  In particular, in Theorem C of \cite{ILM}, they proved that if $\acd G < 4/3$, then $G$ is nilpotent, and they showed that this bound is best possible.

Moret\'o and Hung suggested in \cite{MoNg} that it may be profitable to look at a variation of this average. In particular, they suggested looking at the average of the degree of the irreducible characters whose degrees are not divisible by some fixed prime $p$.  This idea was pushed further by Hung in \cite{nguyen}.  Let $p$ be a prime and let $\irri {p'}G$ be the set of irreducible characters whose degrees are not divisible by $p$, and write $\acdd {p'}G$ for the average value of the degrees of the characters in $\irri {p'}G$. In Theorem 1.2 of \cite{nguyen}, Hung proved that that if $\acdd {2'}G < 3$, $\acdd {3'}G < 3$, $\acdd {5'}G < 11/4$, or $\acdd {p'}G < 16/5$ when $p > 5$, then $G$ is solvable.  He also gave examples to see that each of these bounds are best possible.

Given that $\acd G$ can determine nilpotence, it was natural to see if $\acdd {p'}G$ determines $p$-nilpotence.  In fact, Hung was able to show in Theorem 1.1 of \cite{nguyen} that if $p = 2$ and $\acdd {2'}G < 3/2$ or if $p$ is odd and $\acdd {p'}G < 4/3$, then $G$ is $p$-nilpotent.  It is not difficult to see that these bounds are best possible for $p = 2$ and $p = 3$, but Hung \cite{nguyen} suggested that for odd primes $p$ that the bound should be $2(p+1)/(p+3)$.  In this paper, we prove that in fact this is the best bound.  In particular, we prove the following.

\begin{theorem} \label{first}
Let $G$ be a solvable group and let $p$ be an odd prime.  If $\acdd {p'}G < 2(p+1)/(p+3)$, then $G$ is $p$-nilpotent.
\end{theorem}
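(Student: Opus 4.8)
The plan is to argue by induction on $|G|$, assuming $\acdd{p'}G < 2(p+1)/(p+3)$ and showing that $G$ must be $p$-nilpotent. First I would observe that the bound $2(p+1)/(p+3)$ is just under $2$, so in particular $\acdd{p'}G < 2$, and since $p$ is odd this is a very tight constraint: it forces the great majority of the $p'$-degree irreducible characters to be linear. More precisely, if $k$ denotes the number of linear characters (so $k = |G:G'|$) and the remaining characters in $\irri{p'}G$ have degrees $d_1,\dots,d_m \ge 2$ (all coprime to $p$, hence all $\ge 2$ and, being odd-ish in the relevant sense, the smallest possible nonlinear $p'$-degree when $p$ is odd is $2$), then the inequality $\sum (1 + \text{excess}) < \frac{2(p+1)}{p+3}(k+m)$ severely limits $m$ relative to $k$. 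The key numeric point is that the extremal configuration should be $G'$ of order $p$ with $G/G'$ acting with orbits of size... — concretely one wants to pin down that $|G:G'|$ is large compared to the number and size of the nonlinear $p'$-characters.

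The main structural steps I would carry out are as follows. Let $N$ be a minimal normal subgroup of $G$; since $G$ is solvable, $N$ is an elementary abelian $q$-group for some prime $q$. By standard reduction lemmas on character averages (restricting or inflating characters), $\acdd{p'}{G/N} \le \acdd{p'}G$, so $G/N$ is $p$-nilpotent by induction. If $q \ne p$, then combining $p$-nilpotence of $G/N$ with the fact that $N$ is a $p'$-group should give $p$-nilpotence of $G$ directly (the $p$-complement of $G/N$ pulls back to a normal $p$-complement of $G$, using that $N$ times that complement is normal and a $p'$-group). So the crux is the case $q = p$: here $N \le O_p(G)$, and I need to derive a contradiction from the assumption that $G$ is not $p$-nilpotent. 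In that case there is a nontrivial irreducible character of $G/N$ of $p'$-degree coming from the non-$p$-nilpotent structure, and I would track how characters of $N$ extend or induce to $G$ to produce enough nonlinear $p'$-degree characters to push $\acdd{p'}G$ above the bound. One efficient route: reduce to the situation where $G = PH$ with $P = O_p(G)$ a minimal normal subgroup, $H$ a $p$-complement acting faithfully and irreducibly (or nearly so) on $P$, and $G' \le $ ... — then count characters. Gallagher's theorem and Clifford theory let me describe $\irri{p'}G$ explicitly in terms of $\irr{P}^H$-orbits and characters of stabilizers, and a direct count gives $\acdd{p'}G \ge 2(p+1)/(p+3)$ in every such non-$p$-nilpotent configuration, with equality precisely in the examples the abstract alludes to.

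The main obstacle I anticipate is the case $q = p$ analysis, specifically making the character count tight enough to get the exact constant $2(p+1)/(p+3)$ rather than merely some bound less than $2$. Getting solvability-type solvable-group reductions to collapse $G$ down to a manageable form ($O_p(G)$ minimal normal, complement acting irreducibly) will require care — one must ensure that passing to such a quotient or subgroup does not increase $\acdd{p'}$, which is not automatic for subgroups, so I would work with quotients and with the internal structure of $O_{p'}(G)$ and the Fitting subgroup. The delicate arithmetic is to show that when $H$ acts on $P = (\mathbb{Z}/p)^n$ with all orbits on $P \setminus \{1\}$ of size divisible by $p$ (which is what non-$p$-nilpotence forces, essentially by a transfer/Burnside argument) the resulting $p'$-degree character degrees and multiplicities satisfy $\sum_{\chi \in \irri{p'}G}\chi(1) \ge \frac{2(p+1)}{p+3}|\irri{p'}G|$; here the worst case is $n=1$, $H$ cyclic of order dividing $p-1$ acting faithfully, giving $G$ nonabelian of order $p\cdot e$ with $e \mid p-1$, and one checks the average equals exactly $2(p+1)/(p+3)$ when $e = p-1$ — this pins down the constant and the extremal example simultaneously.
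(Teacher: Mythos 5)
Your outline follows the same skeleton as the paper's argument (induction on $|G|$, pass to $G/N$ for a minimal normal subgroup $N$, dispose of the case where $N$ is a $p'$-group, and in the case $N$ a $p$-group reduce to a faithful irreducible module for a complement and count characters via Clifford theory), but two of the steps you defer are exactly where the real work lies, and one of your concrete claims is wrong.

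First, the reduction in the $q=p$ case to ``$G=PH$ with $H$ acting faithfully and irreducibly'' is asserted as a plan rather than proved. The paper obtains it by taking the preimage $K$ of the normal $p$-complement of $G/N$, choosing a Hall $p'$-subgroup $K_1$ of $K$, applying the Frattini argument to get $G = N\,\norm G{K_1}$, and then proving that $H=\norm G{K_1}$ is core-free in $G$. That core-freeness argument requires knowing that $\acdd {p'}{G/M}\le\acdd {p'}G$ for \emph{every} minimal normal subgroup $M$ of $G$ contained in $H$, including those with $M\cap G'=1$; for such $M$ the ``all new characters are nonlinear, hence of degree at least $2$'' argument does not apply (the new characters include linear ones), and a separate argument with central subgroups and Gallagher's theorem is needed (the paper's Theorem~\ref{acd cent k}). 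Your sketch does not address this case at all, and without it you cannot rule out a nontrivial core of $H$, i.e.\ you cannot get faithfulness. Note also that the complement $H$ obtained this way is isomorphic to $G/N$, not to a Hall $p'$-subgroup of $G$, so the nonabelian case cannot be avoided and needs its own counting theorem; this is the longest computation in the paper and is not a ``direct count.''

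Second, your identification of the extremal configuration is incorrect. For $H$ cyclic of order $e$ acting faithfully on a module of order $p$, the count gives $\acdd {p'}G = e(e+p-1)/(e^2+p-1)$; at $e=p-1$ this equals $2(p-1)/p$, which is strictly \emph{larger} than $2(p+1)/(p+3)$ for $p>3$. The minimum over admissible $e\ge 2$ is attained at $e=2$, i.e.\ at the dihedral group $D_{2p}$, which is the extremal example the paper exhibits. Relatedly, the parenthetical claim that non-$p$-nilpotence forces all $H$-orbits on $P\setminus\{1\}$ to have size divisible by $p$ cannot be right: those orbit sizes sum to $p^a-1$, which is coprime to $p$, and in the cyclic case every orbit on $\irr V\setminus\{1_V\}$ is regular of $p'$-size $|H|$. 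The correct dichotomy in the counting step is abelian versus nonabelian $H$, with the sharp constant $2(p+1)/(p+3)$ arising only in the abelian (cyclic) case and the nonabelian case giving the stronger bound $\acdd{p'}G\ge 2$.
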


Note that if $D_{2p}$ is the dihedral group of order $2p$, then $\acdd {p'}{D_{2p}} = 2(p+1)/(p+3)$, so the bound in Theorem \ref{first} is best possible.  We also recall that Hung proved in Theorem 1.2 of \cite{nguyen} that if $\acdd {p'}G < 11/4$, then $G$ is solvable.  Since $2(p+1)/(p+3) < 2 < 11/4$, we can Theorem 1.2 \cite{nguyen} to remove the solvable hypothesis from Theorem \ref{first}.

\begin{corollary} \label{first a}
Let $G$ be a group and let $p$ be a prime.  If $\acdd {p'}G < 2(p+1)/(p+3)$, then $G$ is $p$-nilpotent.
\end{corollary}

In \cite{nguyen}, Hung showed that one could obtain similar results by further restricting the characters that are used to compute the average values.  In particular, if $k$ is a field, then $\irri kG$ is the set of irreducible characters of $G$ whose values lie in $k$ and $\irri {k,p'}G$ is the set of irreducible characters of $G$ with values in $k$ and whose degrees are not divisible by $p$.  We write $\acdd kG$ to denote the average value of the degrees of the characters in $\irri kG$ and $\acdd {k,p'}G$ for the average value of the degrees in $\irri {k,p'}G$.  In Theorem 1.3 (i) and Corollary 1.4 (i), (iii), and (iv) of \cite{nguyen}, Hung showed that if any one of $\acdd {\QQ}G$, $\acdd {\QQ,2'}G$, $\acdd {\RR}G$, or $\acdd {\RR,2'}G$ is strictly less than $3/2$, then $G$ has a normal $2$-complement.  He suggested that the bound for these inequalities should be $2$.  We are now able to prove that this bound holds.

\begin{theorem} \label{second}
Let $G$ be a solvable group.  If one of the following holds:
\begin{enumerate}
\item $\acdd {\QQ,2'}G < 2$,
\item $\acdd {\QQ}G < 2$,
\item $\acdd {\RR,2'}G < 2$,
\item $\acdd {\RR}G < 2$,
\end{enumerate}
then $G$ is $2$-nilpotent.
\end{theorem}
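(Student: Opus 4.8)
The plan is to reduce all four hypotheses to the single assertion ``if $G$ is solvable and $\acdd{\QQ,2'}G<2$, then $G$ is $2$-nilpotent,'' and to prove that by a minimal-counterexample argument. For the reduction we use the inclusions $\irri{\QQ,2'}G\subseteq\irri{\QQ}G$, $\irri{\RR,2'}G\subseteq\irri{\RR}G$ and $\irri{\QQ,2'}G\subseteq\irri{\RR,2'}G$, together with the elementary observation that deleting from a finite set of positive reals some entries each of which strictly exceeds the current mean only decreases the mean (and leaves it unchanged if nothing is deleted). A real-valued linear character has order at most $2$ and is therefore rational; a rational or real character of even degree has degree at least $2$; and a real, non-rational character of odd degree is non-linear and so has degree at least $3$. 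Since $1_G$ is never deleted, each of the four hypotheses forces $\acdd{\QQ,2'}G<2$, so it suffices to prove statement (i).

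Let $G$ be a counterexample of least order. Routine reductions give $O_{2'}(G)=1$: if $N:=O_{2'}(G)\ne 1$, split $\irri{\QQ,2'}G=\irri{\QQ,2'}{G/N}\sqcup\mathcal{C}$ with $\mathcal{C}$ the characters not trivial on $N$; since a rational linear character lies in the kernel of every odd-order normal subgroup, the characters in $\mathcal{C}$ are non-linear of odd degree, hence of degree at least $3$, and comparing means as above gives $\acdd{\QQ,2'}{G/N}<2$, so $G/N$ is $2$-nilpotent by minimality, and a normal $2$-complement of $G/N$ pulls back through the odd-order group $N$ to one of $G$. Hence $O_{2'}(G)=1$, so $F(G)=O_2(G)=:P\ne 1$ and $\cent G P\le P$. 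Next, $G$ has no central minimal normal subgroup $Z_0$ (necessarily of order $2$, as $G$ acts trivially on it and $O_{2'}(G)=1$). The point is that if $\rho$ affords a rational character $\chi$, then $\det\rho$ is a rational linear character of order at most $2$, because the eigenvalue multiset of $\rho(g)$ is Galois-stable for every $g$. If some rational linear character $\mu_0$ is not trivial on $Z_0$, then tensoring by $\mu_0$ is a degree-preserving bijection from $\irri{\QQ,2'}{G/Z_0}$ onto the set of members of $\irri{\QQ,2'}G$ not trivial on $Z_0$; while if no rational linear character is non-trivial on $Z_0$, then by taking determinants no member of $\irri{\QQ,2'}G$ is non-trivial on $Z_0$ either. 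In either case $\acdd{\QQ,2'}G=\acdd{\QQ,2'}{G/Z_0}<2$, so $G/Z_0$ is $2$-nilpotent by minimality, whence $G$ is $2$-nilpotent by Burnside's normal $p$-complement theorem (the relevant normal subgroup has a central Sylow $2$-subgroup), a contradiction. Therefore $\gpcen G=1$; since $1\ne\cent G P=\gpcen P\trianglelefteq G$, there is a non-central minimal normal subgroup $N\le P$, and as $N$ is then a nontrivial irreducible $\mathbb{F}_2 G$-module (so $\dim_{\mathbb{F}_2}N\ge 2$) whose dual is again irreducible, every $G$-orbit on $\irr N\setminus\{1_N\}$ has size at least $3$ (the $2$-dimensional case being settled by $GL_2(2)\cong S_3$). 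Finally, no rational linear character of $G$ is non-trivial on $N$ — its restriction to $N$ would have a proper, nontrivial, $G$-invariant kernel in $N$ — so the rational linear characters of $G$ are exactly those of $G/N$; write $2^k$ for their number.

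It remains to run a counting argument, according to whether $G/N$ is $2$-nilpotent. Writing $\mathcal{F}$ for the set of non-linear rational characters of $G$ of odd degree (all of degree at least $3$), the hypothesis $\acdd{\QQ,2'}G<2$ is equivalent to $\sum_{\chi\in\mathcal{F}}(\chi(1)-2)<2^k$; moreover $\mathcal{F}=\mathcal{F}_0\sqcup\mathcal{B}$, where $\mathcal{F}_0$ is the analogous set for $G/N$ and $\mathcal{B}$ consists of those characters in $\mathcal{F}$ not trivial on $N$. If $G/N$ is not $2$-nilpotent, then by minimality $\acdd{\QQ,2'}{G/N}\ge 2$, i.e.\ $\sum_{\mathcal{F}_0}(\chi(1)-2)\ge 2^k$, so $\sum_{\mathcal{F}}(\chi(1)-2)\ge 2^k$, a contradiction. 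Hence $G/N$ is $2$-nilpotent, and its normal $2$-complement pulls back to $K=N\rtimes L\trianglelefteq G$ with $L$ of odd order, $L\ne 1$, $G/K$ a $2$-group, and (using that $N$ is minimal normal in $G$) $C_N(L)=1$ and $[N,L]=N$; consequently every $L$-orbit on $\irr N\setminus\{1_N\}$ has size at least $3$, and every irreducible character of $K$ lying over a nontrivial character of $N$ has odd degree. As all characters of the elementary abelian $2$-group $N$ are rational, for a representative $\lambda$ of each such orbit, inducing to $K$ the canonical linear (hence rational) extension of $\lambda$ to its inertia subgroup in $K$ yields a rational irreducible character of $K$ of odd degree equal to the orbit size ($\ge 3$). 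One then has to transport this supply of characters up to $G$ through the $2$-group $G/K$ and conclude that $\sum_{\mathcal{B}}(\chi(1)-2)\ge 2^k$, contradicting $\acdd{\QQ,2'}G<2$.

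The last step is where I expect essentially all the difficulty to lie: one must keep control of both the degrees and the fields of values of the irreducible characters of $G$ lying over the rational, odd-degree characters of $K$ when $G/K$ is a $2$-group — being $K$-invariant and rational does not by itself guarantee a rational extension to $G$ — and the arithmetic has to be organised tightly enough to recover exactly the constant $2$, the equality cases being the groups with $\acdd{\QQ,2'}{}=2$ such as $A_4$, $\mathrm{SL}_2(3)$, $C_2\times A_4$ and $C_2\times\mathrm{SL}_2(3)$. The reductions in the first two paragraphs are, by comparison, routine.
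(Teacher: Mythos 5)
Your reductions of hypotheses (2)--(4) to hypothesis (1) are correct and essentially identical to the paper's (real linear characters are rational, and the deleted characters all have degree at least $2$, so Lemma \ref{averages} applies in contrapositive form). The early reductions in your minimal counterexample ($O_{2'}(G)=1$, elimination of central minimal normal subgroups via determinants and tensoring by a rational linear character, the equivalence of $\acdd{\QQ,2'}G<2$ with $\sum_{\chi\in\mathcal{F}}(\chi(1)-2)<2^k$, and the conclusion that $G/N$ is $2$-nilpotent with $K=N\rtimes L$, $C_N(L)=1$) are also sound. But the proof is not complete: the entire content of the theorem is concentrated in the step you defer, namely producing enough rational odd-degree irreducible characters of $G$ (not of $K$) lying over $\irr N\setminus\{1_N\}$ to force $\sum_{\mathcal{B}}(\chi(1)-2)\ge 2^k$. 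As you yourself note, a $K$-invariant rational character of $K$ need not have a rational extension to $G$, and nothing in your setup controls the fields of values or the degrees of the constituents of induced characters as they pass through the $2$-group $G/K$. That is a genuine gap, not a routine verification.

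The idea you are missing is the paper's use of the Frattini argument to change the structural frame before any counting is done. With $K_1$ a Hall $2'$-subgroup of $K$ one gets $G=N\,N_G(K_1)$, and setting $H=N_G(K_1)$ one shows $H\cap N=1$ and that $H$ is core-free (any minimal normal subgroup of $G$ inside $H$ would force $N$ to centralize $K_1$). Thus $G=N\rtimes H$ with $N$ a \emph{faithful} irreducible $H$-module, and the counting is carried out in this base case only (Lemma \ref{abelian 3} and Theorem \ref{nonabelian 3}): one works with $H$-orbits on $\irr N\setminus\{1_N\}$ of odd length, extends each orbit representative $\alpha_i$ canonically to its inertia group $NT_i$ (where it is visibly rational, since $\alpha_i$ takes values $\pm 1$ and $T_i$ centralizes $N/\ker{\alpha_i}$), and induces to $G$; a Galois argument then pins down $|\irri{\QQ,2'}{G\mid\alpha_i}|$ exactly as $|\irri{\QQ,2'}{T_i}|$. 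This sidesteps precisely the extension problem through $G/K$ that blocks your argument. In your frame the acting group $G/C_G(N)$ is not even faithful on $N$, so the orbit--stabilizer bookkeeping that makes the arithmetic close up (with the boundary cases $S_4$, $A_4$, etc.\ giving equality) is not available. To finish along the paper's lines you would need to replace your pair $(N,K)$ by the semidirect decomposition $G=N\rtimes N_G(K_1)$ and then prove the two base-case estimates.
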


Notice that all of the irreducible characters of $S_4$ are rational valued, so $\acdd {\QQ}{S_4} = \acdd {\RR}{S_4} = \acd {S_4} = (1 + 1 + 2 + 3 + 3)/5 = 2$ and $\acdd {\QQ,2'}{S_4} = \acdd {\RR,2'}{S_4} = \acdd {2'}{S_4} = (1 + 1 + 3 + 3)/4 = 2$.  This implies that the bounds in Theorem \ref{second} are best possible.  Using Theorem 10.2 of \cite{nguyen}, we can remove the solvability hypothesis on Theorem \ref{second}.

When $p$ is an odd prime, we let $\QQ_p$ be the cyclotomic extension of $\QQ$ by a $p$th root unity.  In Theorem 1.3 (ii) and Corollary 1.4 (ii) of \cite{nguyen}, Hung showed that if either $\acdd {\QQ_p,p'}G$ or $\acdd {\QQ_p}G$ is strictly less than $4/3$, then $G$ has a normal $p$-complement.  Again, he suggested that $2(p+1)/(p+3)$ was the correct bound.  We prove that this bound holds.

\begin{theorem} \label{third}
Let $G$ be a a solvable group, and let $p$ be an odd prime.  If one of the following holds:
\begin{enumerate}
\item $\acdd {\QQ_p,p'}G < 2(p+1)/(p+3)$,
\item $\acdd {\QQ_p}G < 2(p+1)/(p+3)$,
\end{enumerate}
then $G$ is $p$-nilpotent.
\end{theorem}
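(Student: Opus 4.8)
The plan is to obtain Theorem \ref{third} by running the argument behind Theorem \ref{first} (and Theorem \ref{second}) while carrying along fields of values, after first disposing of part (2). For part (2): if $\chi \in \irri {\QQ_p}G$ has degree divisible by $p$, then $\chi(1) \ge p > 2 > 2(p+1)/(p+3)$, so $\acdd {\QQ_p}G$ is a weighted average of $\acdd {\QQ_p,p'}G$ and a quantity that is at least $p$; since the second contribution already exceeds $2(p+1)/(p+3)$, the hypothesis $\acdd {\QQ_p}G < 2(p+1)/(p+3)$ forces $\acdd {\QQ_p,p'}G < 2(p+1)/(p+3)$. Hence (2) implies (1), and it suffices to treat (1).

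For (1) I would take a counterexample $G$ of least order: a solvable group that is not $p$-nilpotent with $\acdd {\QQ_p,p'}G < 2(p+1)/(p+3)$. It is tempting simply to quote Theorem \ref{first}, but this does not work: passing from $\irri {p'}G$ to the subset $\irri {\QQ_p,p'}G$ can delete non-linear characters and hence raise the average, so $\acdd {p'}G < 2(p+1)/(p+3)$ need not hold. Instead the idea is to re-run, step by step, the reductions used to prove Theorem \ref{first}, now keeping the restriction to $\QQ_p$-valued characters. Two facts make this feasible. First, inflation along any quotient map $G \to G/N$ sends $\irri {\QQ_p,p'}{G/N}$ injectively into $\irri {\QQ_p,p'}G$ preserving degrees, so every reduction modulo a normal subgroup is compatible with the restriction. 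Second, the group $\mathrm{Gal}(\QQ(\zeta_{|G|})/\QQ_p)$ permutes $\irri {p'}G$ with fixed-point set exactly $\irri {\QQ_p,p'}G$, so the characters in $\irri {p'}G \setminus \irri {\QQ_p,p'}G$ fall into orbits of length at least $2$ of constant degree; this is the device that lets one transfer information between $\acdd {p'}G$ and $\acdd {\QQ_p,p'}G$ whenever a reduction calls for it.

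Carrying out these reductions should force, as in Theorem \ref{first}, that $O_{p'}(G) = 1$ and that $G = O_p(G) \rtimes H$ with $H$ a nontrivial $p'$-group acting nontrivially; minimality, together with the bound being smaller than $2$, should then cut the problem down to $G$ being essentially a Schmidt-type Frobenius group $C_p \rtimes C_s$ with $s$ a prime dividing $p-1$. In such a group the non-linear irreducible characters are induced from $C_p$ and so take values in $\QQ(\zeta_p) = \QQ_p$, while a linear character is $\QQ_p$-valued precisely when its order lies in $\{1,2,p\}$. For $s = 2$ this retains all of $\irr {D_{2p}}$ and gives $\acdd {\QQ_p,p'}G = \acdd {p'}{D_{2p}} = 2(p+1)/(p+3)$; for $s > 2$ only the principal character survives among the linear ones and $\acdd {\QQ_p,p'}G = ps/(s+p-1) > 2(p+1)/(p+3)$ (equivalently $p^2(s-2) + s(p-2) + 2 > 0$). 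Either way this contradicts the choice of $G$.

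The step I expect to be the main obstacle is the reduction that eliminates a minimal normal $p'$-subgroup $N$: one must show that $\irri {\QQ_p,p'}G$ still inherits the inductive hypothesis, which requires understanding how Clifford theory over $N$ interacts with fields of values — for instance, characters of $G$ lying over a non-principal $\theta \in \irr N$ must be shown either to have degree at least $2$ or to occur in Galois orbits of length at least $2$, so that they cannot drag $\acdd {\QQ_p,p'}G$ below the bound. The prime $2$ dividing $|N|$ is the delicate case, since it can produce $\QQ_p$-valued linear characters of $G$ that are nontrivial on $N$; handling this, together with the parallel bookkeeping needed to import Theorem \ref{second}, is where I expect the genuinely new work to lie, the remainder following the proofs of Theorems \ref{first} and \ref{second} closely.
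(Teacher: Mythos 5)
Your overall strategy coincides with the paper's: part (2) reduces to part (1) because the discarded characters have degree divisible by $p$, hence at least $2$, so Lemma \ref{averages} applies; and part (1) is to be proved by rerunning the induction behind Theorem \ref{first} while tracking fields of values. The paper does exactly this, once and for all: Theorem \ref{main}(1) is stated for an arbitrary field $k$ containing the primitive $p$th roots of unity, and Theorem \ref{third}(1) is simply the instance $k=\QQ_p$. So the plan is the right one. The problem is that, as written, the proposal defers precisely the steps that constitute the proof, and the one structural claim you do assert about the endgame is false.

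Concretely: (i) your claim that minimality cuts the base case down to a Frobenius group $C_p\rtimes C_s$ with $s$ a prime dividing $p-1$ is not correct. The minimal counterexample collapses only to $G=VH$ with $V$ a faithful irreducible $H$-module of order $p^a$ for some $a\ge 1$ and $H$ a core-free complement that may be nonabelian and may even have order divisible by $p$. The bulk of the paper's work lives exactly here: Lemma \ref{abelian 3} computes $\acdd kG$ for cyclic $H$ and must separately dispose of the exceptional cases $|H|\in\{2,3,p^a-1\}$ where the average dips below $2$, using Lemma \ref{calculus} to compare $2(p^a+1)/(p^a+3)$ with the target $2(p+1)/(p+3)$; and Theorem \ref{nonabelian 3} handles nonabelian $H$ by producing an $H$-orbit on $\irr V\setminus\{1_V\}$ of $p'$-length at least $4$ and then a Galois argument showing $|\irri {k,p'}{G\mid\alpha_i}|=|\irri {k,p'}{T_i}|$. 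None of this is supplied or replaced by your sketch. (ii) The reduction you yourself flag as the main obstacle --- a minimal normal subgroup whose nonprincipal characters might induce \emph{linear} $k$-valued characters of $G$ --- is real, and the paper resolves it with Theorem \ref{acd cent k}: for a central minimal normal $K$ with $K\cap G'=1$, either $K\cap A^k(G)=1$, in which case multiplication by $k$-valued extensions gives a degree-preserving bijection and $\acdd {k,p'}{G/K}=\acdd {k,p'}G$, or $K\le A^k(G)$, in which case every character of $\irri {k,p'}G$ outside $\irri {k,p'}{G/K}$ is nonlinear and Lemma \ref{averages} applies. Your proposed substitute (Galois orbits of length at least $2$) is neither carried out nor obviously adequate in the linear case. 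As it stands the proposal is a correct outline of the paper's approach with the decisive lemmas missing and an incorrect description of the terminal configuration.
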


We note that all the irreducible characters of $D_{2p}$ lie in $\QQ_p$ and have $p'$-degree; so $\acdd {\QQ_p}{D_{2p}} = \acdd {\QQ_p,p'}{D_{2p}} = \acdd {p'}{D_{2p}}$.  As we saw before, this implies that the bounds in Theorem \ref{third} are best possible.  Again, we can use Theorem 10.2 of \cite{nguyen} to remove the solvability hypothesis from Theroem \ref{third}.

If we restrict our attention to groups with odd order, we can further improve the bounds in Theorem \ref{third}.

\begin{theorem} \label{fourth}
Let $G$ be a group with odd order, and let $p$ be an odd prime.  If one of the following holds:
\begin{enumerate}
\item $p = 7$ and $\acdd {7'}G < 9/5$,
\item $p \ne 7$ and $\acdd {p'}G < 2$,
\item $\acdd {Q_p,p'}G < 2$,
\item $\acdd {Q_p}G < 2$,
\end{enumerate}
then $G$ is $p$-nilpotent.
\end{theorem}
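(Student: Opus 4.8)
The plan is to follow the template of the proofs of Theorems~\ref{first} and~\ref{third}: take a minimal counterexample, cut it down to a minimal non-$p$-nilpotent group, and do the character count there. The one new feature is that odd order forces the prime acting on the Sylow $p$-subgroup to be at least $3$, and that is exactly what improves the bound $2(p+1)/(p+3)$ to $2$ (and to $9/5$ in the exceptional case $p=7$).

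Concretely, I would suppose the theorem fails and take $G$ of least order among odd-order groups that are not $p$-nilpotent but satisfy the relevant one of (1)--(4); by Feit--Thompson $G$ is solvable. I would then apply the reduction lemmas already available from the proofs of Theorems~\ref{first}--\ref{third} --- that none of $\acdd {p'}{-}$, $\acdd {\QQ_p,p'}{-}$, $\acdd {\QQ_p}{-}$ increases on passing to $G/O_{p'}(G)$, to $G/\Phi(O_p(G))$, or to the quotient by a central $q$-subgroup of a complement, together with the fact that $p$-nilpotence is detected on $G/O_{p'}(G)$ --- to arrange that $O_{p'}(G)=1$ and that $G$ is a minimal non-$p$-nilpotent group. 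By It\^o's classification, $G=P\rtimes Q$ with $P=O_p(G)$ the normal Sylow $p$-subgroup of exponent $p$ and $Q$ cyclic; since its unique maximal subgroup would centralise $P$ and hence lie in $O_{p'}(G)=1$, $Q$ has prime order $q\ne p$ acting faithfully and with no nonzero fixed points on $P/\Phi(P)$. Writing $n$ for the order of $p$ modulo $q$, this forces $[G,G]=P$, $|P:\Phi(P)|=p^{n}$, and $q\mid p^{n}-1$; and since $|G|$ is odd, $q\ge 3$, so $p^{n}-1$ being even yields $p^{n}\ge 2q+1\ge 7$.

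Next I would carry out the character count, which is explicit and, for the degrees that matter, unaffected by $\Phi(P)$. From $[G,G]=P$ there are exactly $q$ linear characters of $G$. Any irreducible character of $p'$-degree lies over a nontrivial linear character $\lambda$ of $P$; since $G$ has no nontrivial $Q$-invariant linear character of $P$ (one would extend to a linear character of $G$ that is nontrivial on $P=[G,G]$), the $Q$-orbit of $\lambda$ has size $q$, so the character has degree $q$, and there are $(p^{n}-1)/q$ of them. A linear character of $P$ has values in $\QQ_p$, so each of these degree-$q$ characters does as well, whereas a nontrivial linear character of $G$ has a primitive $q$-power root of unity as a value and so lies outside $\QQ_p$ since $q\ne p$; every further $\QQ_p$-valued irreducible character has degree divisible by $p$, hence is at least $p\ge 3$. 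Therefore
\[
\acdd {p'}G=\frac{q\,(q+p^{n}-1)}{q^{2}+p^{n}-1},\qquad
\acdd {\QQ_p,p'}G=\frac{p^{n}q}{q+p^{n}-1},
\]
and $\acdd {\QQ_p}G$ is a weighted average of $\acdd {\QQ_p,p'}G$ with a set of degrees each divisible by $p$. A short computation using $q\ge 3$ and $p^{n}\ge 2q+1$ now gives $\acdd {\QQ_p,p'}G\ge (2q+1)/3>2$, so $\acdd {\QQ_p,p'}G$ and hence also $\acdd {\QQ_p}G$ exceed $2$, contradicting (3) and (4); next, $q(q+p^{n}-1)/(q^{2}+p^{n}-1)\ge 2$ unless $(q,p^{n})=(3,7)$, which contradicts (2) when $p\ne 7$ since $p^{n}=7$ forces $p=7$; and finally, when $(q,p^{n})=(3,7)$ one has $p=7$ and $G$ is the Frobenius group of order $21$, for which $\acdd {7'}G=9/5$, contradicting (1). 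Since these cases are exhaustive, no counterexample exists.

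The step I expect to be the real obstacle is the reduction in the second paragraph: one must check that each of the three averages behaves monotonically under $G\mapsto G/O_{p'}(G)$, under $G\mapsto G/\Phi(O_p(G))$, and under factoring out a central $q$-subgroup of a complement, so that minimality of $G$ genuinely forces the clean minimal-non-$p$-nilpotent structure with a complement of prime order; and for parts (3) and (4) one must in addition follow fields of values through these quotients. This is precisely where the lemmas developed for Theorems~\ref{first}--\ref{third} must be used with care. By contrast the last step --- minimising the two displayed rational functions subject to $q\mid p^{n}-1$, $n$ equal to the order of $p$ modulo $q$, and $p,q$ odd, and identifying the Frobenius group of order $21$ as the unique extremal group --- is routine.
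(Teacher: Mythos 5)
There is a genuine gap at the heart of your reduction: you cannot get down to a minimal non-$p$-nilpotent group. Minimality of a counterexample, combined with quotient lemmas, only tells you that \emph{quotients} $G/N$ inherit the hypothesis and hence are $p$-nilpotent; it tells you nothing about proper \emph{subgroups}, because a bound on $\acdd {k,p'}G$ is not inherited by subgroups (averages of character degrees behave badly under restriction). It\^o's classification --- normal Sylow $p$-subgroup of exponent $p$ with a cyclic complement whose unique maximal subgroup centralizes it, hence a complement of prime order $q$ --- requires every proper subgroup of $G$ to be $p$-nilpotent, and that is exactly what you cannot arrange. What the quotient reductions actually yield (and what the paper uses) is the weaker structure $G=VH$ with $V$ a minimal normal $p$-subgroup that is a faithful irreducible module for a core-free complement $H\cong G/V$, where $H$ is an arbitrary group of odd order: cyclic of any order dividing $|V|-1$ if abelian, or nonabelian. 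Your explicit count is correct but only covers $|H|=q$ prime; it omits precisely the cases that carry the content of the theorem. In the paper these are Lemma \ref{abelian 3} (cyclic $H$ of arbitrary order, where the answer depends on $|H:A^k(H)|$ and where the exceptional cases --- including $|H|=3$, $A^k(H)=1$, $p^a=7$, which is the source of the value $9/5$ --- must be eliminated using the odd-order and field-of-values hypotheses) and Theorem \ref{nonabelian 3} (nonabelian $H$).

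A secondary point: the quotient reductions you list ($G/O_{p'}(G)$, $G/\Phi(O_p(G))$, central $q$-subgroups of a complement) are not the ones available either. The paper only quotients by minimal normal subgroups, splitting into $N\le G'$ (new characters are nonlinear, so Lemma \ref{averages} applies) and $N\cap G'=1$ (Theorem \ref{acd cent k}); iterating this does give $O_{p'}(G)=1$ and a unique minimal normal $p$-subgroup, but nothing finer than the $G=VH$ structure above. Note also that the theorem is deduced in the paper directly from Theorem \ref{main}, parts (3), (4) and (5), with $k=\CC$ or $k=\QQ_p$, so the argument you should be reconstructing is the odd-order branch of that induction together with the two base-case counts.
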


Note that if $G$ the Frobenius group of order $21$, then $\acdd {7'}G = 9/5$, so the bound in Hypothesis (1) of Theorem \ref{fourth} is best possible.  We do not have examples where the other bounds in the hypotheses of Theorem \ref{fourth} are met, and we suspect that we do not have the best bound in those cases.

I would like to thank Nguyen Hung for bringing this problem to my attention and for providing me with a preprint of \cite{nguyen}.

\section{Odds and Ends}

We begin a lemma about averages that has to be well-known, but we want to explicitly prove it here.

\begin{lemma}\label{averages}
Let $A$ and $B$ be sets of real numbers (allowing repetitions) and let $C$ be the concatenation of $A$ and $B$.  If there is an integer $d$ so that $b \ge d$ for all $b \in B$ and ${\rm ave} (A) \ge d$, then ${\rm ave} (C) \ge d$.  In particular, if $b \ge {\rm ave} (A)$ for all $b \in B$, then ${\rm ave} (A) \le {\rm ave} (C)$.
\end{lemma}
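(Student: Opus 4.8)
The plan is to reduce everything to the trivial observation that the sum over the concatenation $C$ is the sum over $A$ plus the sum over $B$, and that the cardinality of $C$ is $|A| + |B|$. Write $m = |A|$ and $n = |B|$, and let $S_A$, $S_B$, $S_C$ denote the respective sums, so $S_C = S_A + S_B$ and $|C| = m + n$.

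For the first assertion, I would argue as follows. Since ${\rm ave}(A) = S_A/m \ge d$ we get $S_A \ge dm$, and since every $b \in B$ satisfies $b \ge d$ we get $S_B \ge dn$. Adding, $S_C = S_A + S_B \ge d(m+n) = d|C|$, so ${\rm ave}(C) = S_C/|C| \ge d$. (One should note the degenerate cases: if $B$ is empty the claim is immediate, and if $A$ is empty the hypothesis ${\rm ave}(A) \ge d$ is vacuous or interpreted away, so we may assume $m \ge 1$; I would mention this briefly for completeness.)

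For the ``in particular'' clause, set $a = {\rm ave}(A) = S_A/m$. The hypothesis that $b \ge a$ for all $b \in B$ gives $S_B \ge an$, while $S_A = am$ by definition. Hence $S_C = S_A + S_B \ge am + an = a(m+n) = a|C|$, and therefore ${\rm ave}(C) = S_C/|C| \ge a = {\rm ave}(A)$. Note this half does not actually invoke the integer $d$; it is the same one-line computation with $a$ in place of $d$, and I would present it as such rather than trying to derive it formally from the first statement (which would require $a$ to be an integer).

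There is essentially no obstacle here: the only thing to be careful about is the bookkeeping of cardinalities and the trivial edge cases where $A$ or $B$ is empty. The lemma is recorded mainly so that later sections can cite it cleanly when splicing together sets of character degrees, so the proof should be short and explicit rather than clever.
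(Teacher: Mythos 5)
Your proof is correct and follows essentially the same route as the paper's: compare sums and cardinalities, use $S_A \ge dm$ and $S_B \ge dn$, and conclude; the paper likewise handles the second claim by taking $d = {\rm ave}(A)$ (the integrality of $d$ is never actually used, so your remark about not formally reducing to the first statement is a minor stylistic point, not a substantive difference).
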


\begin{proof}
We have $|C| = |A| + |B|$.  Observe that $\sum_{a \in A} a = {\rm ave} (A) |A|$.  Since every element $b \in B$ satisfies $b \ge d$, we have $\sum_{b \in B} b \ge d |B|$.  With this in mind, we calculate:
$$
{\rm ave} (C) = \frac {\sum_{a \in A} a + \sum_{b \in B} b}{|A|+ |B|} \ge \frac {d |A| + d |B|}{|A| + |B|} = d \frac {|A|+|B|}{|A|+|B|} = d,
$$
as desired.  For the second conclusion, take $d = {\rm ave} (A)$.
\end{proof}

We also need a quick result from calculus.

\begin{lemma} \label{calculus}
Let $p \ge 2$ be a fixed real number.  Then the function $f (x) = 2(p^x + 1)/(p^x + 3)$ is an increasing function on the interval $[1,\infty)$ and so its minimum value on that interval is $f (1) = 2(p+1)/(p+3)$.
\end{lemma}

\begin{proof}
This is an exercise in calculus.  Note that $f (x)$ is a continuous function.  The derivative of $f (x)$ is
$$
f'(x) = \frac {(p^x + 3)(2 \ln (p) p^x) - 2(p^x + 1) \ln (p) p^x}{(p^x + 3)^2}.
$$
Observe that $(p^x + 3)(2 \ln (p) p^x) - 2(p^x + 1) \ln (p) p^x = 4 \ln (p) p^x$ is clearly positive since $\ln p \ge \ln 2 > 0$.  This implies that $f'(x)$ is always positive on $[1,\infty)$, and so, $f (x)$ is an increasing function.
\end{proof}

Finally, we get to some group theory.  We need an easy result about the characters of $G$ when $G$ has a normal subgroup $K$ so that $K \cap G' = 1$.

\begin{lemma}\label{cent bij}
Let $K$ be a normal subgroup of $G$ so that $K \cap G' = 1$.  If $\phi \in \irr K$, then $\phi$ extends to $\tilde \phi \in \irr G$ and the map $f: \irr {G/K} \rightarrow \irr {G \mid \phi}$ by $f (\chi) = \chi \tilde\phi$ is a bijection such that $f (\chi) (1) = \chi (1)$ for all $\chi \in \irr {G/K}$.
\end{lemma}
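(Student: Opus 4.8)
The plan is to build the extension of $\phi$ directly from the hypothesis $K \cap G' = 1$, which forces $K$ to be central and abelian, and then verify the bijection by the standard Gallagher-type argument. First I would observe that since $K \cap G' = 1$ and $K \trianglelefteq G$, the commutator $[K,G] \le K \cap G' = 1$, so $K \le \gpcen{G}$; in particular $K$ is abelian and $\phi \in \irr K$ is linear. I would then note that $G' \le \ker{\phi}$ automatically fails to be quite what we want, so instead I would produce $\tilde\phi$ by hand: define $\tilde\phi$ on $G$ by choosing a transversal, or more cleanly, observe that $G/G'$ contains an isomorphic copy of $K$ (since $K \cap G' = 1$ means $K$ embeds in $G/G'$ via the natural map) and that $KG'/G'$ is a direct factor of the abelian group $G/G'$ — no, more carefully: since $K \cap G' = 1$, the projection $G \to G/G'$ restricted to $K$ is injective, and $\phi$ extends to a linear character $\lambda$ of $G/G'$ because any linear character of a subgroup of an abelian group extends; pulling $\lambda$ back to $G$ gives the desired linear $\tilde\phi \in \irr G$ with $\tilde\phi|_K = \phi$.

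Next I would establish that $f(\chi) = \chi\tilde\phi$ lands in $\irr{G \mid \phi}$ and is a bijection onto it. Since $\tilde\phi$ is linear, $\chi\tilde\phi$ is irreducible whenever $\chi$ is, and $(\chi\tilde\phi)(1) = \chi(1)$; viewing $\chi \in \irr{G/K}$ as a character of $G$ trivial on $K$, we get $(\chi\tilde\phi)|_K = \chi(1_K)\cdot\phi = \phi$ up to the usual identification, so $f(\chi)$ lies above $\phi$. This is precisely the setup of Gallagher's theorem (Corollary 6.17 in Isaacs' book) in the case where $\phi$ is $G$-invariant — and $\phi$ is $G$-invariant because $K$ is central — which gives that $\chi \mapsto \chi\tilde\phi$ is a bijection $\irr{G/K} \to \irr{G \mid \phi}$. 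Alternatively, injectivity follows since $\chi_1\tilde\phi = \chi_2\tilde\phi$ implies $\chi_1 = \chi_2$ (multiply by $\overline{\tilde\phi}$, which is the inverse linear character), and surjectivity follows because any $\psi \in \irr{G \mid \phi}$ satisfies $\psi\overline{\tilde\phi} \in \irr G$ with $K$ in its kernel (as $(\psi\overline{\tilde\phi})|_K = \phi\overline\phi = 1_K$ since $\phi$ is linear), hence $\psi\overline{\tilde\phi} \in \irr{G/K}$ and $\psi = (\psi\overline{\tilde\phi})\tilde\phi = f(\psi\overline{\tilde\phi})$.

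The degree statement $f(\chi)(1) = \chi(1)$ is immediate from linearity of $\tilde\phi$, so no further work is needed there. The main obstacle — really the only point requiring care — is the very first step: correctly extracting that $K$ is central and that $\phi$ genuinely extends to all of $G$ rather than just to some intermediate subgroup; once $K \le \gpcen G$ is in hand, everything reduces to Gallagher's theorem applied to the $G$-invariant linear character $\phi$. I would cite Corollary 6.17 of Isaacs to keep the argument short, spelling out only the extension of $\phi$ via the abelian quotient $G/G'$.
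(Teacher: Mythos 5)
Your proposal is correct and follows essentially the same route as the paper: both extend $\phi$ by using $K \cap G' = 1$ to view $\phi$ as a linear character of a subgroup of the abelian group $G/G'$, extend it there, and then invoke Gallagher's theorem (Corollary 6.17 of Isaacs) for the bijection. Your extra direct verification of injectivity and surjectivity just fills in the step the paper calls ``easy to see.''
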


\begin{proof}
Notice that $G'$ is contained in the kernel of $1_{G'} \times \phi \in \irr {G' \times K}$.  Since $G/G'$ is abelian, $1_{G'} \times \phi$ has an extension $\tilde\phi \in \irr {G/G'}$.  The bijection is easy to see, but one can refer to Gallagher's theorem (Corollary 6.17 of \cite{text}) if one wants a detailed proof.
\end{proof}

Now we consider characters whose values lie in a subfield of the complex numbers.  Let $k$ be a subfield of the complex numbers and we note that $k$ contains the rational numbers.  Let $G$ be a group.  Observe that if $\lambda$ is a linear character of $G$ with values in $k$, then $\lambda^{-1}$ will also be a linear character of $G$ with values in $k$.  If in addition $\delta$ is a linear character of $G$ with values in $k$, then $\lambda \delta$ will be a linear character with values in $k$.  Also, the principal character will have values in $k$.  Therefore, the set of linear characters with values in $k$ forms a subgroup of $\irr {G/G'}$.  Let $A^k (G)$ be the intersection of the kernels of the linear characters of $G$ with values in $k$, and observe that $\irr {G/A^k (G)}$ is the set of linear characters with values in $k$.  We see that $G' \le A^k (G) \le G$.

Since $\irri kG$ is the set of all irreducible characters with values in $k$, it is not difficult to see that $\irri kG \cap \irr {G/G'}$ will be the set of linear characters with values in $k$, and so, $\irri kG \cap \irr {G/G'} = \irr {G/A^k (G)}$.  Notice that if $p$ is a prime, it will still be the case that $\irri {k,p'}G \cap \irr {G/G'} = \irr {G/A^k (G)}$.

Observe that if $k = \CC$, then $A^k (G) = G'$ and if $k = \QQ$, then $A^k (G) = A^2 (G)$.  When $p$ is an odd prime and $k = \QQ_p$, it is not difficult to see that $A^{Q_p} (G) = A^2 (G) \cap A^q (G)$.

Note that if $H$ is a subgroup of $G$, then $H \cap A^k (G)$ will be a normal subgroup of $H$ and since $H/(H \cap A^k (G)) \cong HA^K(G)/A^k (G)$, we see that all of the characters in $\irr {H/(H \cap A^k (G))}$ are linear characters with values in $k$.  This implies that $\irr {H/(H \cap A^k (G))} \subseteq \irr {H/A^k (H)}$ and so, $A^k (H) \le H \cap A^k (G)$.

We apply this in the case where there is a minimal normal subgroup $K$ of $G$ so that $G' \cap K = 1$.

\begin{theorem} \label{acd cent k}
Suppose $k$ is a field, $G$ is a group, and $p$ is a prime.  Assume that $K$ is a minimal normal subgroup of $G$ such that $G' \cap K = 1$.  If $\acdd {k,p'}G \le 2$, then $\acdd {k,p'}{G/K} \le \acdd {k,p'}G$.
\end{theorem}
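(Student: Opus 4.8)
My plan is to reduce the whole statement to the averaging estimate of Lemma~\ref{averages}, after first locating $K$ precisely. Since $K\trianglelefteq G$ we have $[K,G]\le K$, and commutators always lie in $G'$, so $[K,G]\le K\cap G'=1$; hence $K\le\gpcen{G}$, and a central minimal normal subgroup has prime order, say $|K|=q$. Next I would identify, via inflation, the set $\irri{k,p'}{G/K}$ with $\{\psi\in\irri{k,p'}{G}\colon K\le\ker{\psi}\}$, using that inflation preserves both character degrees and fields of values. Writing $A$ for the multiset of degrees of these characters, $B$ for the multiset of degrees of the remaining members of $\irri{k,p'}{G}$, and $C$ for the concatenation of $A$ and $B$, the goal becomes ${\rm ave}(A)\le{\rm ave}(C)$, and the hypothesis reads ${\rm ave}(C)\le 2$. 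Because $A^k(G)\trianglelefteq G$ and $K$ is minimal normal, there are exactly two cases, $K\le A^k(G)$ or $K\cap A^k(G)=1$, and I would treat them separately.

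In the first case every linear character of $G$ with values in $k$ has $K$ in its kernel, so every degree-$1$ member of $\irri{k,p'}{G}$ already lies in the inflated part; therefore every entry of $B$ is at least $2$. Combining ${\rm ave}(C)\le 2$ with $b\ge 2$ for all $b\in B$, a one-line estimate gives ${\rm ave}(A)\le 2$, after which $b\ge 2\ge{\rm ave}(A)$ for every $b\in B$, and the second conclusion of Lemma~\ref{averages} yields ${\rm ave}(A)\le{\rm ave}(C)$. This is the only place the bound $2$ is needed.

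In the second case $K$ embeds into $G/A^k(G)$, whose points are separated by the $k$-valued linear characters of $G$, so there is a linear character $\lambda$ of $G$ with values in $k$ that is nontrivial on $K$; as $|K|=q$ is prime, $\lambda|_K$ has order $q$, so its powers run over $\irr{K}$. The map $\sigma\colon\psi\mapsto\psi\lambda$ is a degree-preserving permutation of $\irr{G}$, and since $\lambda$ and $\lambda^{-1}$ take values in $k$ it restricts to a degree-preserving permutation of $\irri{k,p'}{G}$; I would analyze its orbits. If $\sigma^{o}$ fixes $\psi$ then $\psi(g)=0$ whenever $\lambda^{o}(g)\ne 1$; but $K\le\gpcen{G}$ forces $|\psi(x)|=\psi(1)\ne 0$ for $x\in K$, so $\lambda^{o}$ is trivial on $K$, whence $q\mid o$. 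Thus every $\sigma$-orbit on $\irri{k,p'}{G}$ has length divisible by $q$. Inside a single orbit $\{\psi\lambda^{j}\}$, writing $\psi|_K=\psi(1)\mu$ for the character $\mu\in\irr{K}$ lying under $\psi$, one sees that $K\le\ker{\psi\lambda^{j}}$ exactly when $(\lambda|_K)^{j}=\mu^{-1}$, which holds for precisely one residue of $j$ modulo $q$; since $q$ divides the orbit length, exactly a $1/q$ fraction of each orbit lies in the inflated part. Summing degrees and counting over all orbits then gives $|A|=|C|/q$ and $\sum A=\tfrac1q\sum C$, hence ${\rm ave}(A)={\rm ave}(C)$, so in fact $\acdd{k,p'}{G/K}=\acdd{k,p'}{G}$ in this case.

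I expect the second case to be the crux. Its content is that deleting $K$ removes a proportion $1/q$ of the relevant characters in a degree-balanced way, and the device that makes this work is the observation that a central $K$ lies in the support of every irreducible character, which is exactly what forces each $\sigma$-orbit length to be a multiple of $q$; without that one would be tempted to try to choose $\lambda$ of order $q$, which need not be possible, and the orbit argument neatly sidesteps that difficulty. The first case, by contrast, is essentially bookkeeping together with Lemma~\ref{averages}.
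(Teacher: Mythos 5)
Your argument is correct, and its skeleton coincides with the paper's: both proofs first observe that $K$ is central of prime order $q$, both split on whether $K \le A^k (G)$ or $K \cap A^k (G) = 1$, both handle the first alternative by noting that every $k$-valued linear character of $G$ then factors through $G/K$, so the characters lost in passing to $G/K$ all have degree at least $2$ and Lemma~\ref{averages} together with the hypothesis $\acdd {k,p'}G \le 2$ finishes, and both obtain the exact equality $\acdd {k,p'}{G/K} = \acdd {k,p'}G$ in the second alternative, where the bound $2$ is never needed. Where you genuinely diverge is the bookkeeping for that second alternative. The paper extends each $\phi \in \irr K$ to a $k$-valued linear character $\tilde\phi$ of $G$ and uses the Gallagher bijection of Lemma~\ref{cent bij} to identify $\irri {k,p'}{G \mid \phi}$ with $\irri {k,p'}{G/K}$ degree-by-degree, so that $\irri {k,p'}G$ decomposes into $|K|$ fibers, each a copy of $\irri {k,p'}{G/K}$. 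You instead fix a single $k$-valued linear $\lambda$ nontrivial on $K$ and count orbits of the degree-preserving permutation $\psi \mapsto \psi\lambda$ of $\irri {k,p'}G$, showing every orbit has length divisible by $q$ (via the observation that a central $K$ lies in the support of each irreducible character) and meets the inflated set $\irri {k,p'}{G/K}$ in exactly a $1/q$ proportion. The two devices are interchangeable here: your $\lambda$ is one of the paper's $\tilde\phi$'s, and multiplication by it cycles through the fibers over $\irr K$. Your version is self-contained and avoids quoting Gallagher, at the cost of the small extra argument about orbit lengths; the paper's version makes the fiber structure over $\irr K$ explicit, which is closer to how the result is used later. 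I found no gaps.
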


\begin{proof}
Notice that $[K,G] \le K \cap G' = 1$, so $K$ is central in $G$.  This implies that $|K|$ is a prime since it is minimal normal in $G$.  By Lemma \ref{cent bij}, we know that $\phi \in \irr K \setminus \{ 1_k \}$ has an extension to $G$.  If $A^k (G) \cap K = 1$, then $\phi \times 1_{A^k (G)}$ is a character of $K \times A^k (G)$ whose kernel is $A^k (G)$.  Since $G/A^k (G)$ is abelian, $\phi \times 1_{A^k (G)}$ will extend to a character in $\irr {G/A^k (G)}$.  Thus, $\phi$ will have an extension with values in $k$.  On the other hand, if $\phi$ has an extension $\tilde \phi$ with values in $k$, then $A^k (G)$ will lie in the kernel of $\tilde \phi$, and so, $A^k (G) \cap K$ will lie in the kernel of $\phi$, and since $\phi$ is faithful, we have $A^k (G) \cap K = 1$.  Notice that if $A^k (G) \cap K \ne 1$, then $K \le A^k (G)$ by the minimality of $K$.

Suppose first that $A^k (G) \cap K = 1$.  We have for each character $\phi \in \irr K$ there is an extension $\tilde\phi \in \irr {G/A^k (G)}$.  Applying Lemma \ref{cent bij}, we have for each $\phi \in \irr K$ that multiplication by $\tilde\phi$ yields a bijection from $\irr {G/K}$ to $\irr {G \mid \phi}$.   Notice that $\chi \in \irr {G/K}$ has $p'$-degree and values in $k$ if and only if $\chi \tilde\phi$ has $p'$-degree and values in $k$.  Since $K$ is central in $G$, we have $|K| = |\irr K|$. This implies that
$$
|\acdd {k,p'}G| = \sum_{\phi \in \irr K} |\acdd {k,p'}{G \mid \phi}| = |K| |\acdd {k,p'}{G/K}|,
$$
and
$$
\sum_{\chi \in \irri {k,p'}G} \chi (1) = \sum_{\phi \in \irr K} \sum_{\chi \in \irri {k,p'}{G \mid \phi}} \chi (1) = |K| \sum_{\chi \in \irri {k,p'}{G/K}} \chi (1).
$$
With these values, we compute that
$$
\acdd {k,p'}G =
\frac {|K| \sum\limits_{\chi \in \irri {k,p'}{G/K}} \chi (1)}{|K||\irri {k,p'}{G/K}|} = \frac {\sum\limits_{\chi \in \irri {k,p'}{G/K}} \chi (1)}{|\irri {k,p'}{G/K}|} = \acdd {k,p'}{G/K}.
$$

We now suppose that $K \le A^k (G)$.  Note that this implies that $\irr {G/A^k (G)} \subseteq \irr {G/K}$.  We have seen that for every character $\phi \in \irr K \setminus \{ 1_K \}$ that $\phi$ does not have an extension in $\irr {G/A^k (G)}$.  On the other hand, we do know by Lemma \ref{cent bij} does have an extension $\tilde\phi \in \irr {G/G'}$.  We claim that $\irri {k,p'}{G \mid \phi} \cap \irr {G/A^k (G)}$ is empty.  Suppose $\gamma \in \irri {k,p'}{G \mid \phi} \cap \irr {G/A^k (G)}$.  We know from Lemma \ref{cent bij} that multiplication by $\tilde\phi$ is a bijection from $\irr {G/K}$ to $\irr {G \mid \phi}$, so there exists a character $\lambda \in \irr {G/K}$ so that $\lambda \tilde\phi = \gamma$.  Since $\tilde\phi$ and $\gamma$ are linear, it must be that $\lambda$ is linear.  It follows that $\lambda \in \irr {G/G'K}$.  Notice that $G'K \le A^k (G)$, so $\irr {G/A^k (G)}$ is a subgroup of $\irr {G/G'K}$; so $\gamma$ and $\lambda$ both lie in $\irr {G/G'K}$.  This implies that $\tilde\phi \in \irr {G/G'K} \subseteq \irr {G/K}$ which is a contradiction.  This proves the claim.

We have noted that $\irri {k,p'}G \cap \irr {G/G'} = \irr {G/A^k (G)}$.  The previous paragraph shows that $\irr {G/A^k (G)} \subseteq \irri {k,p'}{G/K}$.  Thus, the characters in $\irri {k,p'}G \setminus \irri {k,p'}{G/K}$ are all nonlinear and thus have degrees that are at least $2$.  We can now apply Lemma \ref{averages} to see that $\acdd {k,p'} {G/K} \le \acdd {k,p'}G$.
\end{proof}

\section{Semi-direct products}

In this section, we will consider groups of the form $VH$ where $V$ is an irreducible, faithful module of the group $H$ and $V$ has characteristic $p$ for some prime $p$.  We break our work up into two case depending on whether or not $H$ is abelian.  We will see that this is the base case for our induction.  Note that throughout this section, the hypothesis that the field contain $p$th roots of unity is necessary.  To see this, observe that if $|G|$ is odd, then $1_G$ is the only rational valued irreducible character, and so, $\acdd {\QQ}G = 1$.  Thus, none of the results in this section would be true for odd order groups if the field did not contain the $p$th root unities.

In this next lemma, we see that $|H|$ must divide $|N| - 1$ which implies that $|H|$ is $p'$.  By It\^o's theorem (Theorem 6.15 of \cite{text}), this implies that every character in $\irr G$ has $p'$-degree.  In particular, $\irri {k,p'}G = \irri kG$, and hence, $\acdd {k,p'}G = \acdd kG$.  Also, note that the only values of $p^a$ that can occur in conclusion (2) are $4$ and $7$ since $|H|$ must divide $p^a - 1$.

\begin{lemma} \label{abelian 3}
Assume $H$ acts faithfully on an irreducible module $V$ of order $p^a$, and suppose that $k$ is an extension of $Q$ that contains the primitive $p$th roots of unity.  Let $G = HV$.  Suppose $H$ is a nontrivial abelian group, then $\acdd kG \ge 2$ except in the following cases:
\begin{enumerate}
\item if $|H| = 2$, then $\acdd kG = 2(p^a + 1)/(p^a + 3)$,
\item if $|H| = 3$, $A^k (H) = 1$, and $p^a < 10$, then $\acdd kG = 3(p^a + 2)/(p^a +8)$,
\item if $|H| = p^a - 1$ and $A^k (H) = 1$, then $\acd G = 2(p^a - 1)/p^a$.
\end{enumerate}
\end{lemma}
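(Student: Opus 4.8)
The plan is to describe $G = VH$ completely, write down its character table, identify the $k$-valued irreducible characters, and reduce the statement to a short arithmetic case analysis on $|H|$.

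First I would determine the structure of $H$ and $G$. Since $H$ is abelian and acts faithfully and irreducibly on $V$, the image of $\mathbb{F}_p H$ in $\mathrm{End}_{\mathbb{F}_p}(V)$ is a commutative ring with a faithful simple module, hence a field isomorphic to $\mathbb{F}_{p^a}$, over which $V$ is one-dimensional. Therefore $H$ embeds into $\mathbb{F}_{p^a}^\times$ and is cyclic of order $m := |H|$ with $m \mid p^a - 1$, acting on $V \cong \mathbb{F}_{p^a}$ by scalar multiplication. As $V$ is a nontrivial simple $\mathbb{F}_p H$-module, $\cent VH = 1$ and $[V,H] = V$, so $G' = V$ and $G/V \cong H$ has exactly $m$ linear characters. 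Dually, $H$ acts freely on $\irr V \setminus \{1_V\}$ (scalar multiplication on $\mathbb{F}_{p^a}$), hence with $n := (p^a - 1)/m$ orbits; for $\lambda$ in such an orbit the inertia subgroup $I_G(\lambda)$ equals $V$, so $\lambda^G$ is an irreducible character of $G$ of degree $m$, and these $n$ characters together with the $m$ linear ones account for all of $\irr G$ (indeed $m \cdot 1 + n \cdot m^2 = m p^a = |G|$). Each $\lambda^G$ vanishes off $V$ and restricts to $\sum_\mu \mu$ on $V$, a sum of $p$th roots of unity, so its values lie in $\mathbb{Z}[\zeta_p] \subseteq k$; hence every nonlinear irreducible character of $G$ lies in $\irri kG$. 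Writing $\ell$ for the number of $k$-valued linear characters of $G$ — which, via $G/G' \cong H$, equals $|H : A^k(H)|$, a divisor of $m$ — I conclude that
$$
\acdd kG = \frac{\ell + nm}{\ell + n}.
$$

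Next I would carry out the arithmetic. The inequality $\acdd kG \ge 2$ is equivalent to $n(m - 2) \ge \ell$, and I split on $m \ge 2$, using $\ell \mid m$, $n \ge 1$, and $m \mid p^a - 1$. If $m = 2$, then $\ell = 2$ and $\acdd kG = 2(n+1)/(n+2) = 2(p^a + 1)/(p^a + 3) < 2$, which is conclusion (1). If $m = 3$, then either $\ell = 1$, in which case $n \ge 1$ gives $\acdd kG \ge 2$, or $\ell = 3$ (equivalently $A^k(H) = 1$), in which case $\acdd kG = 3(p^a + 2)/(p^a + 8)$, which is $\ge 2$ unless $p^a < 10$; this yields conclusion (2). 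If $m \ge 4$: when $n \ge 2$ we get $n(m-2) \ge 2(m-2) \ge m \ge \ell$, so $\acdd kG \ge 2$; and when $n = 1$ we have $m = p^a - 1$, any proper divisor $\ell$ of $m$ satisfies $\ell \le m/2 \le m - 2$ (so again $\acdd kG \ge 2$), while $\ell = m$ (equivalently $A^k(H) = 1$) forces $\acdd kG = 2m/(m+1) = 2(p^a - 1)/p^a < 2$, which is conclusion (3). I would also note that in each of the three exceptional cases every linear character is $k$-valued, so $\acdd kG = \acd G$ there, and that the stated formulas agree on the overlapping cases (e.g.\ $p^a = 3$ in both (1) and (3), and $p^a = 4$ in both (2) and (3)).

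The structural ingredients — cyclicity of $H$, the free action on $\irr V$, the Clifford-theoretic description of $\irr G$, and the fact that every nonlinear character is already $\mathbb{Q}(\zeta_p)$-valued — are routine, and the hypothesis that $k$ contains the $p$th roots of unity enters exactly at the step placing those nonlinear characters in $\irri kG$. The main point requiring care is identifying $\ell$ as a divisor of $m$ and matching the condition ``$A^k(H) = 1$'' with ``$\ell = m$'' (using the remarks on $A^k$ from Section~2 applied to $H$); once that is in place, the rest is the bookkeeping above.
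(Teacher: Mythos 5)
Your proposal is correct and follows essentially the same route as the paper: cyclicity of $H$, the free action on $\irr V\setminus\{1_V\}$ giving induced characters of degree $|H|$ with values in $\QQ(\zeta_p)\subseteq k$, the resulting formula for $\acdd kG$ (your $(\ell+nm)/(\ell+n)$ is the paper's expression after substituting $n=(p^a-1)/|H|$), and the same case split on $|H|=2$, $|H|=3$, $|H|\ge 4$. Your reduction of the inequality to $n(m-2)\ge\ell$ is a slightly cleaner packaging of the paper's final estimate, but the content is identical.
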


\begin{proof}
Since $V$ is irreducible and faithful, it follows that $H$ is cyclic (see Lemma 0.5 of \cite{MaWo}).  Because $H$ is cyclic, if $\lambda \in \irr V \setminus \{1_V \}$, then $H_{\lambda} = 1$.  This implies that every orbit of $H$ on $\irr V \setminus \{ 1_V \}$ has length $|H|$.  Since $V$ is an elementary abelian $p$-group, the values of $\lambda$ are $k$th roots of unity, so the values of $\lambda$ lie in $k$, and hence, $\lambda^G \in \irri kG$.  We obtain
$$
|\irri kG| = |\irr {H/A^k (H)}| + \frac {|V| - 1}{|H|} = |H:A^k (H)| + \frac {p^a -1}{|H|}.
$$
Getting a common denominator, we obtain
$$
|\irri kG| = \frac {|H||H:A^k (H)| + p^a - 1}{|H|},
$$
and
$$
\sum_{\chi \in \irri kG} \chi (1) = \sum_{\chi \in \irr {H/A^k (H)}} \chi (1) + |H| \frac {|N| - 1}{|H|} = |H:A^k (H)| + p^a - 1.
$$
We deduce that
$$
\acdd kG = \frac {|H:A^k (H)| + p^a - 1}{\frac {|H||H:A^k (H)| + p^a -1}{|H|}} = \frac {|H| (|H:A^k (H)| + p^a - 1)}{|H||H:A^k (H)| +p^a - 1}.
$$

Note that if $|H| = 2$, then $A^k (H) = 1$; so this equation becomes $\acdd kG = 2 (2 + p^a - 1)/(2 \cdot 2 + p^a - 1) = 2(p^a + 1)/(p^a + 3)$.  This yields Exception 1.  Next, if $|H| = 3$ and $A^k (H) = 1$, then we have $\acdd kG = 3(3 + p^a - 1)/(3 \cdot 3 + p^a - 1) = 3(p^a + 2)/(p^a + 8)$.  If $p^a \ge 10$, then $3 (p^a +2) = 3p^a + 6 \ge 2 p^a + 16 = 2(p^a +8)$, and so, $3(p^a + 2)/(p^a + 8) \ge 2$.  Note that we have Exception 2 if $p^a < 10$.  Suppose that $|H| = 3$ and $A^k (H) = H$.  This implies that $\acdd kG = 3 (1 + p^a - 1)/(3 \cdot 1 + p^a - 1) = 3p^a/(p^a + 2)$.  Notice that $p^a \ge |H| + 1 = 4$, so $3p^a \ge 2p^a + 4 = 2 (p^a + 2)$.  We deduce that $3p^a/(p^a + 2) \ge 2$ as desired.

For the remainder of the proof, we assume that $|H| \ge 4$.  Next, we suppose that $|H| = p^a - 1$ and $A^k (H) = 1$.  Our equation now becomes $\acdd kG = (p^a - 1) ((p^a - 1) + p^a - 1)/((p^a - 1)(p^a - 1) + p^a - 1) = 2(p^a - 1)^2/(p^a-1)(p^a - 1 + 1) = 2(p^a - 1)/p^a$.  This yields Exception 3.

We now can assume that either $|H| < p^a - 1$ or $A^k (H) > 1$.  In either case,$|H:A^k (H)| < p^a - 1$.  We observe that $|H|(|H:A^k (H)| + p^a - 1) = |H||H:A^k (H)| + (|H| - 2)(p^a-1) + 2 (p^a - 1)$.  Since $|H| \ge 4$, we have that $2|H| \ge |H| + 4$, and so, $|H| \ge |H|/2 + 2$.  This implies that $|H| - 2 \ge |H|/2$.  Notice that $|H:A^k (H)|$ divides $|H|$ which divides $p^a - 1$; so $|H:A^k (H)| < p^a - 1$ implies that $|H:A^k (H)| \le (p^a - 1)/2$.  We compute
$$
(|H| - 2) (p^a - 1) \ge (|H|/2) (p^a - 1) = |H| (p^a - 1)/2 \ge |H||H:A^k (H)|.
$$
We conclude that
$$
|H|(|\frac H{A^k (H)}| + p^a - 1) \ge |H||\frac H{A^k (H)}| + |H||\frac H{A^k (H)}| + 2 (p^a - 1).
$$
This yields the inequality
$$
|H|(|\frac H{A^k (H)}| + p^a - 1) \ge 2 (|H||\frac H{A^k (H)}| + p^a - 1),
$$
and therefore, $\acdd kG \ge 2$.
\end{proof}

Combining Lemma \ref{calculus} with Lemma \ref{abelian 3}, we obtain the needed lower bound on $\acd kG$.

\begin{corollary} \label{abelian 4}
Assume $H$ acts faithfully on an irreducible module $V$ of order $p^a$, and suppose that $k$ is an extension of $Q$ that contains the primitive $p$th roots of unity.  Let $G = HV$.  If $H$ is a nontrivial abelian group, then $\acdd kG \ge (2p + 2)/(p + 3)$.
\end{corollary}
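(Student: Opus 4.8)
The plan is to reduce everything to Lemma \ref{abelian 3}: that lemma already gives $\acdd kG \ge 2$ unless we are in one of its three listed exceptional configurations, and since $2(p+1)/(p+3) < 2$ (because $p+1 < p+3$), in the non-exceptional case we are done immediately. So the real work is to check that the bound $2(p+1)/(p+3)$ still holds in each of the three exceptions, and all three reductions are short.

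First I would dispatch Exception 1, where $|H| = 2$ and Lemma \ref{abelian 3} gives $\acdd kG = 2(p^a+1)/(p^a+3)$. Writing this as $f(a)$ for the function $f$ of Lemma \ref{calculus} and using that $a \ge 1$, monotonicity of $f$ on $[1,\infty)$ gives $\acdd kG = f(a) \ge f(1) = 2(p+1)/(p+3)$. This is the one place the calculus lemma is actually used.

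Next, Exception 2, where $|H| = 3$, $A^k(H) = 1$, and $p^a < 10$, with value $\acdd kG = 3(p^a+2)/(p^a+8)$. Here I would use that $|H|$ divides $|V|-1 = p^a-1$, so $p^a \equiv 1 \pmod 3$; the only prime powers below $10$ with this property are $p^a = 4$ and $p^a = 7$, so this exception involves only two genuine cases, and in each one a direct numerical comparison of $3(p^a+2)/(p^a+8)$ with $2(p+1)/(p+3)$ settles it. Finally, Exception 3 has $|H| = p^a - 1$ with $A^k(H)=1$ and value $2(p^a-1)/p^a$; since this branch of Lemma \ref{abelian 3} assumes $|H| \ge 4$ we have $p^a \ge 5$, and the inequality $2(p^a-1)/p^a \ge 2(p+1)/(p+3)$ reduces after clearing denominators to $p+3 \le 2p^a$, which is trivial (for $a=1$ one has $p \ge 5$ so $p+3 \le 2p$, and for $a \ge 2$ one has $2p^a \ge 2p^2 \ge 4p > p+3$).

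There is essentially no hard step here — the statement is a packaging of Lemma \ref{abelian 3} with a finite check. If anything needs care it is Exception 2, where one must remember the divisibility constraint $|H| \mid p^a - 1$ to narrow $p^a$ down to $\{4,7\}$ before comparing numbers; the remark preceding Lemma \ref{abelian 3} already flags exactly this point, so I would simply cite it.
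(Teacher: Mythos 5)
Your proposal is correct and follows essentially the same route as the paper: both reduce to Lemma \ref{abelian 3}, note that the non-exceptional case gives $\acdd kG \ge 2 > 2(p+1)/(p+3)$, and invoke Lemma \ref{calculus} to handle the value $2(p^a+1)/(p^a+3)$. The only organizational difference is that the paper shows the Exception 1 value $2(p^a+1)/(p^a+3)$ is a lower bound for the Exception 2 and 3 values by direct algebra valid for all $p^a \ge 3$ (so Lemma \ref{calculus} is applied just once at the end), whereas you compare each exceptional value to the target $2(p+1)/(p+3)$ separately, using the divisibility $|H| \mid p^a-1$ to cut Exception 2 down to $p^a \in \{4,7\}$ --- both versions work.
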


\begin{proof}
Observe that $p^a \ge 3$, so $p^a (p^a + 1) = p^{2a} + p^a \le p^{2a} + 2p^a - 3 = (p^a - 1)(p^a + 3)$.  This implies that $2 (p^a + 1)/(p^a +3) \le 2 (p^a - 1)/p^a$.  Also, we have that $3(p^a + 2)(p^a +3) = 3p^{2a} + 15 p^a + 18 = 2p^{2a} + (p^a + 15) p^a + 18 \ge 2 p^{2a} + 18 p^a + 16 = 2 (p^a + 1) (p^a + 8)$, and this implies that $2 (p^a + 1)/(p^a +3) \le 3 (p^a + 2)/(p^a + 8)$.  Finally, $2 (p^a +1)/(p^a + 3) < 2$.  In light of Theorem \ref{abelian 3}, we have $2(p^a+1)/(p^a+3) \le \acdd kG$ in all cases.  Using Lemma \ref{calculus}, we see that $2(p+1)/(p+3) \le 2(p^a+1)/(p^a+3)$.  Combining, we obtain the desired conclusion.
\end{proof}

We now turn to the case where $H$ is nonabelian.  The proof of the following theorem should be compared to the proof of Lemma 9.2 of \cite{nguyen}, and our proof is motivated by the proof there.

\begin{theorem} \label{nonabelian 3}
Assume $H$ acts faithfully on an irreducible module $V$ of characteristic $p$, and suppose that $k$ is an extension of $Q$ that contains the primitive $p$th roots of unity.  Let $G = HV$.  If $H$ is a nonabelian group, then $\acdd {k,p'}G \ge 2$.
\end{theorem}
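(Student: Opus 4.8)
The plan is to use Clifford theory relative to the abelian normal subgroup $V$. Write $G=VH$ with $V$ elementary abelian of order $p^a$. Since $H$ is nonabelian it is in particular nontrivial, so the $H$-submodule $[V,H]$ of $V$ is nonzero, hence equal to $V$ by irreducibility, and thus $V\le G'$; also $a\ge 2$, since $a=1$ would make $H\le\aut V\cong C_{p-1}$ abelian. Consequently the linear characters of $G$ are exactly the inflations of the linear characters of $H$, so those lying in $\irri {k,p'}G$ are the inflations of the $k$-valued linear characters of $H$, and there are $[H:A^k(H)]$ of them; every nonlinear irreducible character of $G$ has degree at least $2$. Writing $\acdd {k,p'}G\ge 2$ as $\sum_{\chi\in\irri {k,p'}G}(\chi(1)-2)\ge 0$ and separating off the linear characters, it therefore suffices to prove
$$
\sum_{\chi}\bigl(\chi(1)-2\bigr)\ \ge\ [H:A^k(H)],
$$
the sum running over the nonlinear characters $\chi\in\irri {k,p'}G$.

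Next I would set up the parametrization. For $\lambda\in\irr V\setminus\{1_V\}$ the kernel of $\lambda$ has codimension one, so (as $a\ge 2$) it is not $H$-invariant, whence $H_\lambda<H$; and since $\lambda$ is $H_\lambda$-invariant and $V$ has $H_\lambda$ as a complement in $VH_\lambda$, the formula $\hat\lambda(vh)=\lambda(v)$ defines a linear character $\hat\lambda$ of $VH_\lambda$ extending $\lambda$, with values that are $p$th roots of unity, hence in $k$. By Gallagher's theorem and the Clifford correspondence, $\psi\mapsto(\hat\lambda\psi)^G$ (with $\psi$ inflated along $VH_\lambda/V\cong H_\lambda$) is a bijection $\irr {H_\lambda}\to\irr {G\mid\lambda}$ with $(\hat\lambda\psi)^G(1)=[H:H_\lambda]\psi(1)$. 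The crucial point is a Galois observation: since $k$ contains the $p$th roots of unity, every character of $V$ — and hence $\hat\lambda$ — is fixed by ${\rm Gal}(\overline{\QQ}/k)$, so $((\hat\lambda\psi)^G)^\sigma=(\hat\lambda\psi^\sigma)^G$, which by the bijection equals $(\hat\lambda\psi)^G$ precisely when $\psi^\sigma=\psi$. Thus $(\hat\lambda\psi)^G$ is $k$-valued if and only if $\psi$ is, and therefore $(\hat\lambda\psi)^G\in\irri {k,p'}G$ exactly when $p\nmid[H:H_\lambda]$ and $\psi\in\irri {k,p'}{H_\lambda}$. Together with the identification $\irri {k,p'}{G/V}\cong\irri {k,p'}H$ for the characters containing $V$ in their kernels, this describes $\irri {k,p'}G$ completely.

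The final step is the counting, and this is where the real work lies. I would organize the target sum by the $H$-orbits $\mathcal O$ on $\irr V\setminus\{1_V\}$; an orbit with $p\mid|\mathcal O|$ contributes nothing, since each character it produces has degree divisible by $p$. For an orbit with representative $\lambda$ and $m:=|\mathcal O|=[H:H_\lambda]$ prime to $p$, multiplication by the group $\irr {H/A^k(H)}$ of $k$-valued linear characters of $G$ permutes the block $\{(\hat\lambda\psi)^G:\psi\in\irri {k,p'}{H_\lambda}\}$ and preserves degrees; each of its orbits on that block has size $[H_\lambda A^k(H):A^k(H)]$ and consists of characters of a single degree $m\psi(1)\ge m\ge 2$, so such an orbit contributes at least its own size once that degree is at least $3$ (equivalently, unless $m=2$ and the underlying $\psi$ is linear). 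The task is then to show that these contributions, together with that of the nonlinear characters of $H$ itself, total at least $[H:A^k(H)]$. The key structural input is that $\bigcap_{\lambda\ne 1_V}H_\lambda=1$ while $H$ is nonabelian: this rules out every nontrivial $\lambda$ having $[H:H_\lambda]=2$ (that would force $H$ to be an elementary abelian $2$-group), so, when $p\nmid|H|$, there is an $H$-orbit with $m\ge 3$; combining with $A^k(H_\lambda)\le H_\lambda\cap A^k(H)$ and the fact that $[H:H_\lambda A^k(H)]$ divides both $[H:H_\lambda]$ and $[H:A^k(H)]$, one can exhibit an orbit (with $m$ prime to $[H:A^k(H)]$) on which $\irr {H/A^k(H)}$ acts freely on characters of degree at least $3$, yielding a single block whose contribution is at least $[H:A^k(H)]$.

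I expect the main obstacle to be making this last step uniform. One must guarantee the existence of such an orbit — or else combine the contributions of several orbits, and perhaps the nonlinear characters of $H$ and of the stabilizers $H_\lambda$ — in every configuration; the delicate case is when $p$ divides $|H|$, so that the ``$m\ge 3$'' orbit furnished above may carry a factor of $p$ and drop out, forcing one to extract what is needed from the surviving orbits (whose indices $m$ may share a common factor with $[H:A^k(H)]$). A handful of small groups $H$ will presumably have to be verified directly, and the example $S_4=\mathbb{F}_2^2\rtimes S_3$ — where the two degree-$3$ characters form a single regular orbit under the group of order $2$ and contribute exactly $2$ — shows both that the bound is attained and that no slack can be wasted in the argument.
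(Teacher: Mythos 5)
Your setup is correct and coincides with the paper's: reduce to showing that the nonlinear characters in $\irri {k,p'}G$ contribute at least $|H:A^k(H)|$ to $\sum_\chi(\chi(1)-2)$, parametrize $\irr {G\mid\lambda}$ via an extension $\hat\lambda$ of $\lambda$ to $VH_\lambda$ together with Gallagher and the Clifford correspondence, and use the hypothesis that $k$ contains the $p$th roots of unity to show that $(\hat\lambda\psi)^G$ is $k$-valued exactly when $\psi$ is. But the proof is not complete: the entire weight of the theorem rests on the final counting step, and you have only sketched a strategy for it while explicitly conceding that you do not know how to make it work uniformly (in particular when $p$ divides $|H|$, or when the surviving orbit lengths share factors with $|H:A^k(H)|$). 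That acknowledged obstacle is precisely the content of the theorem, so as it stands there is a genuine gap.

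The paper closes this gap with two ingredients that your outline is missing. First, a structural claim: unless $p=2$ and $G\cong S_4$ (which is checked directly, giving average exactly $2$), the group $H$ has an orbit on $\irr V\setminus\{1_V\}$ whose length is \emph{at least $4$} and prime to $p$. This is proved by noting that every point stabilizer $T$ is core-free, so $H$ embeds in $\sym{|H:T|}$ and no orbit has length $2$; an orbit of length $3$ forces $H\cong S_3$ and $|V|=p^2$, where either $p=2$ or a count of fixed points of the Sylow $2$-subgroups produces an orbit of length $6$ coprime to $p\ge 5$; and if all orbits have length at least $4$, one of them must have $p'$-length because the lengths sum to $p^a-1$. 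Your observation that no orbit has length $2$ gets you an orbit of length $\ge 3$, but not the length-$\ge 4$, $p'$-length orbit that the argument actually needs. Second, rather than your proposed free-action-of-linear-characters argument, the paper uses the cruder inequalities $|T_1:A^k(T_1)|\le|\irri {k,p'}{T_1}|$ and $A^k(T_1)\le T_1\cap A^k(H)<A^k(H)$ (the strict inequality coming from core-freeness of $T_1$ and $1<H'\le A^k(H)$), which give
$$
\bigl(|H:T_1|-2\bigr)\,|\irri {k,p'}{T_1}|\ \ge\ \tfrac12\,|H:T_1|\,|T_1:A^k(T_1)|\ =\ \tfrac12\,|H:A^k(T_1)|\ \ge\ |H:A^k(H)|,
$$
using $|H:T_1|\ge 4$. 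This single orbit already supplies the needed surplus $|H:A^k(H)|$, and every other $p'$-length orbit contributes nonnegatively since its index is at least $2$. If you want to salvage your approach, these are the two lemmas you need to prove; the block/free-action idea is not needed and would in any case have to contend with nonlinear $\psi$, for which the stabilizer of $(\hat\lambda\psi)^G$ under multiplication by $\irr {H/A^k(H)}$ need not have the size you assert.
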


\begin{proof}
We first claim that either $p = 2$ and $G \cong S_4$ or $H$ has an orbit on $\irr V \setminus \{ 1_V \}$ whose length is not divisible by $p$ and is at least $4$.

Observe that $\irr V$ can also be viewed as a faithful irreducible module for $H$ of order $p^a$.  Suppose $\alpha \in \irr V \setminus \{ 1_V \}$, and let $T$ be the stabilizer of $\alpha$ in $H$.  We see that $T$ must be core-free and thus, $H$ is isomorphic to a subgroup of $S_l$ where $l = |H:T|$.  Since $H$ is nonabelian, we cannot have $l = 2$.  Thus, we see that every orbit of $H$ on $\irr V \setminus \{ 1_V \}$ has length at least $3$.

If $l = 3$, then $H$ is isomorphic to a subgroup of $S_3$, and since $H$ is nonabelian, we must have $H \cong S_3$.  Since $V$ is irreducible, this implies that $|V| = p^2$.  If $p = 2$, then $G \cong S_4$.  Thus, we may assume that $p$ is odd.  Since $H$ acts faithfully on $V$, we must have $p \ge 5$.  Notice that every character in $\irr V \setminus \{ 1 \}$ that lies in an $H$-orbit of size $3$ must be stabilized by a Sylow $2$-subgroup of $H$.  It is not difficult to see that the centralizer in $\irr V$ of each Sylow $2$-subgroup of will have size at most $p$.  Since $H$ contains three Sylow $2$-subgroups, this implies that $\irr V \setminus \{ 1_V \}$ contains at most $3(p-1)$ irreducible characters in $H$-orbits of size $3$.  Since $|\irr {V} \setminus 1| = p^2 - 1 = (p+1)(p-1) \ge 6(p-1) > 3(p-1)$.  It follows that $\irr V \setminus \{ 1_V \}$ lies in an $H$-orbit of length greater than $3$.  Observe that the length of this orbit must be $6$, and since $p \ge 5$, the length of this orbit is not divisible by $p$.

We may now assume that every $H$-orbit on $\irr V \setminus \{ 1_V \}$ has length at least $4$.  We know that the lengths of the $H$-orbits of $\irr V \setminus \{ 1_V \}$ must sum to $p^a - 1$, so at least one of the orbits must have length that is not divisible by $p$.  This proves the claim.

Since the irreducible characters of $S_4$ are rational valued,
$$
\acdd {k,2'}{S_4} = (1 + 1 + 2 + 3 + 3)/5 = 10/5 = 2,
$$
and so, we have the desired conclusion when $p = 2$ and $G \cong S_4$.  We may assume that $H$ has an orbit on $\irr V \setminus \{ 1_V \}$ whose length is not divisible by $p$ and is at least $4$.  Let $\alpha_1, \dots, \alpha_l$ be representatives for the $H$-orbits of $\irr V \setminus \{ 1_V \}$ whose lengths are not divisible by $p$, and (renumbering if necessary) assume that $\alpha_1$ is an orbit of length at least $4$.  Let $T_i$ be the stabilizer in $H$ of $\alpha_i$.  Observe that $VT_i$ will be the stabilizer of $\alpha_i$ in $G$.  By the Fundamental Counting Principle, we know that the $|H:T_i|$ will equal the length of the $H$-orbit of $\alpha_i$.

Note that since $T_i$ stabilizes $\alpha_i$ and $\alpha_i$ is linear, $T_i$ will centralize $H/\ker {\alpha_i}$.  It follows that $VT_i/\ker{\alpha_i} \cong V/\ker{\alpha_i} \times T_i$, and $\irr {VT_i \mid \alpha_i} = \{ \alpha_i \times \tau \mid \tau \in \irr {T_i} \}$.  Notice that the values that $\alpha_i$ takes on are $p$th roots of unity; so the values $\alpha_i$ take on lie in $k$.  If $\gamma \in \irr {T_i}$, then $\alpha_i \times \gamma \in \irri {k,p'}{VT_i \mid \alpha_i}$ if and only if $\gamma \in \irri {k,p'}{T_i}$.  Furthermore, if $\gamma \in \irri {k,p'}{T_i}$, then $(\alpha_i \times \gamma)^G \in \irri {k,p'}G$ by Clifford's theorem (Corollary 6.11 of \cite{text}) and the fact that $p$ does not divide $|H:T_i|$.  Suppose $\gamma \not\in \irri k{T_i}$.  This implies that $k[\gamma]$ is a proper extension of $k$, and so, there will be a nontrivial Galois automorphism $\sigma$ of $k[\gamma]$ that fixes $k$ and does not fix $\gamma$.  Then $\gamma^\sigma \ne = \gamma$, and so, $\alpha_i \times \gamma^\sigma \ne \alpha_i \times \gamma$.  By the Clifford correspondence (Corollary 6.11 of \cite{text}), we have that $(\alpha_i \times \gamma^\sigma)^G \ne (\alpha_i \times \gamma^\sigma)^G$.  It is not difficult to see that $\alpha_i^\sigma = \alpha_i$, so $((\alpha_i \times \gamma)^G)^\sigma = ((\alpha_i \times \gamma)^\sigma)^G = (\alpha_i \times \gamma^\sigma)^G \ne (\alpha_i \times \gamma)^G$.  It follows that the values of $(\alpha_i \times \gamma)^G$ do not lie in $k$.  This implies that $|\irri {k,p'}{G \mid \alpha_i}| = |\irri {k,p'}{VT_i \mid \alpha_i}| = |\irr {k,p'}{T_i}|$.

It follows that
$$
|\irri {k,p'}G| = |\irri {k,p'}H| + \sum_{i=1}^l |\irri {k,p'}{T_i}|.
$$
Using Corollary 6.11 of \cite{text}, we have $\irri {k,p'}{G \mid \alpha_i} = \{ (\alpha_i \times \tau)^G \mid \tau \in \irri {k,p'}{T_i} \}$.  Since $|G:VT_i| = |H:T_i|$, it follows that the degrees of the characters in $\irri {k,p'}{G \mid \alpha_i}$ are $\{ |H:T_i| \tau (1) \mid \tau \in \irri {k,p'}{T_i} \}$.  With this in mind, we obtain
$$
\sum_{\chi \in \irri {k,p'}G} \chi (1) = \!\!\!\!\! \sum_{\gamma \in \irri {k,p'}H} \!\!\!\!\! \gamma (1) + \sum_{i=1}^l |H:T_i| \left(  \sum_{\theta \in \irri {k,p'}{T_i}} \!\!\!\!\! \theta (1) \right)
$$
Observe that $\irri {k,p'}H$ will be the union of the linear characters of $H$ having values in $k$ with the nonlinear irreducible characters of $H$ having $p'$-degree and values in $k$.  We write $\nli {k,p'}H$ for the set of nonlinear irreducible characters of $H$ with degrees not divisible by $p$ an having values in $k$.  We obtain $|\irri {k,p'}H| = |H:A^k (H)| + |\nli {k,p'}H|$ and
$$
\sum_{\gamma \in \irri {k,p'}H} \gamma (1) \ge |H:A^k (H)| + 2 |\nli {k,p'}H|.
$$
Also, observe that
$$
\sum_{\theta \in \irri {k,p'}{T_i}} \!\!\!\!\! \theta (1) \ge |\irri {k,p'}{T_i}|.
$$
This yields
$$
\sum_{\chi \in \irri {k,p'}G} \chi (1) \ge |H:A^k (H)| + 2 |\nli {k,p'}H| + \sum_{i=1}^l |H:T_i| |\irri {k,p'}{T_i}|.
$$

Remember that $\alpha_1$ was chosen so that $|H:T_1| \ge 4$.  As we mentioned earlier, $A^k (T_1) \le A^k (H)$, and since $H$ is nonabelian and $T_1$ is core-free, we see that $A^k (T_1) < A^k (H)$.  (If $A^k (H) = A^k (T)$, then $1 < A^k (H) \le T$ and this contradicts the fact that $T_1$ is core-free.)  This implies that $|H:A^k (t)| \ge 2|H:A^k (H)|$.  Also, we know that every linear character of $T_1$ with values in $k$ is contained inside $\irri {k,p'}{T_1}$, so we have $|T:A^k (T_1)| \le |\irri {k,p'}{T_1}|$.  Since $|H:T_1| \ge 4$, we see that $2|H:T_1| - 4 \ge |H:T_1|$ and hence, $|H:T_1| - 2 \ge |H:T_1|/2$.  Combining these, we have
$$
(|\frac H{T_1}|-2)|\irri {k,p'}{T_1}| \ge \frac {|\frac H{T_1}|}2 \frac{|\frac {T_1}{A^k ({T_1})}|}1 = \frac {|\frac H{A^k ({T_1})}|}2 \ge |\frac H{A^k (H)}|.
$$
This implies that
$$
|\frac H{T_1}| |\irri {k,p'}{T_1}| = (|\frac H{T_1}| - 2) |\irri {k,p'}{T_1}| + 2 |\irri {k,p'}{T_1}|.
$$
We obtain the inequality
$$
|\frac H{T_1}| |\irri {k,p'}{T_1}| \ge |\frac H{A^k (H)}| + 2|\irri {k,p'}{T_1}|,
$$
and therefore, as $|H:T_i| \ge 2$ for all $2 \le i \le k$, we calculate that
$$
|\frac H{A^k (H)}| + 2 |\nli {k,p'}H| + \sum_{i=1}^l |\frac H{T_i}| |\irri {k,p'}{T_i}| \ge
$$
$$
2 (|\frac H{A^k (H)}| + |\nli {k,p'}H| + \sum_{i=1}^l |\irri {k,p'}{T_i}|).
$$
We conclude that $\acdd {k,p'}H \ge 2$ as desired.
\end{proof}

We can obtain $\acdd kG$ in this same situation.

\begin{corollary}\label{nonabelian 4}
Assume $H$ acts faithfully on an irreducible module $V$ of characteristic $p$, and suppose that $k$ is an extension of $Q$ that contains the primitive $p$th roots of unity.  Let $G = HV$.  If $H$ is a nonabelian group, then $\acdd kG \ge 2$.
\end{corollary}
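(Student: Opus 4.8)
The plan is to derive this directly from Theorem \ref{nonabelian 3} together with the averaging principle of Lemma \ref{averages}; essentially no new group theory is required. By Theorem \ref{nonabelian 3} we already know $\acdd {k,p'}G \ge 2$. Now $\irri kG$ is the concatenation of $\irri {k,p'}G$ with the set $S$ consisting of those irreducible characters of $G$ whose values lie in $k$ but whose degrees are divisible by $p$. Every $\chi \in S$ satisfies $p \mid \chi (1)$, and since $p$ is a prime we get $\chi (1) \ge p \ge 2$.

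The key step is then a single application of Lemma \ref{averages}. Let $A$ be the multiset of degrees $\{ \chi (1) : \chi \in \irri {k,p'}G \}$ and let $B$ be the multiset $\{ \chi (1) : \chi \in S \}$, so that the multiset of degrees of the characters in $\irri kG$ is the concatenation $C$ of $A$ and $B$. With the integer $d = 2$ we have ${\rm ave}(A) = \acdd {k,p'}G \ge 2$ and $b \ge 2$ for every $b \in B$, so Lemma \ref{averages} yields ${\rm ave}(C) \ge 2$; since ${\rm ave}(C) = \acdd kG$ by definition, this is precisely the desired inequality. (If $S = \emptyset$, then $\acdd kG = \acdd {k,p'}G \ge 2$ outright.)

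I do not anticipate a genuine obstacle; the argument is a formal bookkeeping step once Theorem \ref{nonabelian 3} is in hand. The one point worth noting explicitly is the exceptional configuration $p = 2$, $G \cong S_4$ that was singled out in the proof of Theorem \ref{nonabelian 3}: there all irreducible characters are rational valued, so $\irri kG = \irr {S_4}$, the degree-$2$ character belongs to $S$, and because its degree equals $d = 2$ the hypotheses of Lemma \ref{averages} still hold, giving $\acdd k{S_4} = (1+1+2+3+3)/5 = 2$. Hence the corollary holds in every case.
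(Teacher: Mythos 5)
Your proof is correct and takes essentially the same route as the paper: both deduce the corollary from Theorem \ref{nonabelian 3} by observing that the characters counted in $\acdd kG$ but not in $\acdd {k,p'}G$ have degree divisible by $p$ and hence at least $2$, and then apply Lemma \ref{averages} with $d = 2$. Your extra remarks about the empty case and the $S_4$ configuration are harmless additional detail, not a departure from the paper's argument.
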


\begin{proof}
By Theorem \ref{nonabelian 3}, we have that $\acdd {k,p'}G \ge 2$.  Notice that all the characters used to compute $\acdd kG$ that were not used for $\acdd {k,p'}G$ have degrees at least $2$, so using Lemma \ref{averages} we have $\acdd kG \ge 2$.
\end{proof}

\section{Main Theorem}

We are now ready to prove the theorems in the Introduction.  The results are all corollaries of this next theorem.  The proof of this theorem should be compared to the proof of Theorem 9.3 of \cite{nguyen}.  The astute reader will that our arguments are closely related to the arguments there.  Again, we note that the hypothesis that $k$ contain the $p$th roots of unity is necessary when $p$ is odd for this theorem since if $G$ has odd order, then $\acdd {\QQ}G = 1$, and when $p$ is an odd prime there exist many groups of odd order that do not have a normal $p$-complement.

\begin{theorem}\label{main}
Let $p$ be a prime, let $k$ be an extension of $Q$ that contains the primitive $p$th roots of unity, and let $G$ be a solvable group.  Assume one of the following:
\begin{enumerate}
\item $p$ is odd and $\acdd {k,p'}G < 2(p+1)/(p+3)$,
\item $p = 2$, $k = Q$, and $\acdd {k,2'}G < 2$,
\item $p = 7$, $|G|$ is odd, $k$ contains a cube root of unity, and $\acdd {k,7'}G < 9/5$,
\item $p = 7$, $|G|$ is odd, $k$ does not contain a cube root of unity, and $\acdd {k,7'}G < 2$,
\item $p$ is an odd prime other than $7$, $|G|$ is odd, and $\acdd {k,p'}G < 2$.
\end{enumerate}
Then $G$ has a normal $p$-complement.
\end{theorem}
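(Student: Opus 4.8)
The plan is to induct on $|G|$, take $G$ to be a counterexample of least order, and reduce to the faithful semidirect products analysed in Section~3. Note first that each of the hypotheses (1)--(5) forces $\acdd {k,p'}G < 2$, since $2(p+1)/(p+3) < 2$ and $9/5 < 2$; in particular the hypothesis $\acdd {k,p'}G \le 2$ of Theorem~\ref{acd cent k} is available.

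\emph{First reductions.} If some minimal normal subgroup $K$ of $G$ satisfies $G' \cap K = 1$, then (as in the proof of Theorem~\ref{acd cent k}) $K$ is central of prime order and Theorem~\ref{acd cent k} gives $\acdd {k,p'}{G/K} \le \acdd {k,p'}G$, so $G/K$ still satisfies the relevant hypothesis and, by induction, has a normal $p$-complement $M/K$. If $p \nmid |K|$ then $M$ is a normal $p$-complement of $G$; if $p \mid |K|$ then $K$ is a central Sylow $p$-subgroup of $M$, so $M$ has a normal $p$-complement $L$ by Burnside's normal $p$-complement theorem, $L$ is characteristic in $M$ and hence normal in $G$, and $L$ is a normal $p$-complement of $G$. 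Either way this contradicts the choice of $G$, so every minimal normal subgroup of $G$ lies in $G'$. Next, if $N \le O_{p'}(G)$ is a minimal normal subgroup of $G$, then $N \le G'$, so every linear character of $G$ lies in $\irr {G/N}$ and the characters of $\irri {k,p'}G \setminus \irri {k,p'}{G/N}$ are nonlinear, of degree at least $2$. By Lemma~\ref{averages}, either $\acdd {k,p'}{G/N} \le \acdd {k,p'}G$ --- so $G/N$ inherits the hypothesis, has a normal $p$-complement by induction, and then so does $G$ since $N$ is a normal $p'$-subgroup --- or $\acdd {k,p'}{G/N} > 2$, which by Lemma~\ref{averages} forces $\acdd {k,p'}G \ge 2$, contrary to the first paragraph. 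Both are impossible, so $O_{p'}(G) = 1$ and $F(G) = O_p(G) \ne 1$.

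\emph{Setting up the module and reducing to $G = V \rtimes U$.} Let $V$ be a minimal normal subgroup of $G$; by the above it is an elementary abelian $p$-group with $V \le G'$. Exactly as for $N$ above, $G/V$ inherits the hypothesis, so it has a normal $p$-complement $K/V$; by Schur--Zassenhaus $K = V \rtimes U$ with $U \ne 1$ a $p'$-group. Since $\cent KV = V \times \cent UV$ is the intersection of two normal subgroups of $G$ and hence normal in $G$, its characteristic $p'$-subgroup $\cent UV$ is normal in $G$, so $\cent UV \le O_{p'}(G) = 1$ and $U$ acts faithfully on $V$. The main work of the proof is to push this further to $G = V \rtimes U$ with $V$ an irreducible $U$-module --- the analogue of the reduction carried out in the proof of Theorem~9.3 of \cite{nguyen} --- after which $V$ is a complemented Sylow $p$-subgroup of $G$ and is $U$-irreducible because it is minimal normal in $G = VU$.

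\emph{Applying the base cases.} Now $G = VU$ with $U \ne 1$ acting faithfully and irreducibly on the characteristic-$p$ module $V$, $|V| = p^a$. If $U$ is nonabelian, Theorem~\ref{nonabelian 3} gives $\acdd {k,p'}G \ge 2$, contradicting $\acdd {k,p'}G < 2$. So $U$ is abelian; then $|U|$ divides $p^a - 1$, every irreducible character of $G$ has $p'$-degree by It\^o's theorem, and $\acdd {k,p'}G = \acdd kG$. Under hypothesis~(1), Corollary~\ref{abelian 4} gives $\acdd kG \ge 2(p+1)/(p+3)$, a contradiction. Under hypotheses (2)--(5), Lemma~\ref{abelian 3} gives $\acdd kG \ge 2$ unless one of its exceptional configurations occurs, and these are ruled out by the extra hypotheses: exception~(1), $|U| = 2$, is impossible for $p = 2$ (then $|U| \mid 2^a - 1$ is odd) and for $|G|$ odd; exception~(3), $|U| = p^a - 1$ with $A^k(U) = 1$, is impossible for $|G|$ odd (then $|U|$ would be even), and impossible under hypothesis~(2) since a cyclic group of order $2^a - 1 \ge 3$ has a nonrational irreducible character, so $A^{\QQ}(U) \ne 1$; exception~(2), $|U| = 3$ with $A^k(U) = 1$ and $p^a < 10$, forces $p^a = 7$ (so $p = 7$) and needs a cube root of unity in $k$, so it is irrelevant to (2) and (5), cannot occur under hypothesis~(4) (where instead $A^k(U) = U$, giving $\acdd {k,7'}G = 3p^a/(p^a + 2) \ge 2$), and under hypothesis~(3) gives $\acdd {k,7'}G = 3(p^a + 2)/(p^a + 8) = 9/5$, contradicting $\acdd {k,7'}G < 9/5$. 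In every case we obtain a contradiction, which proves the theorem. The crux is the reduction to $G = V \rtimes U$ with faithful irreducible action: one must control the case where $G$ properly contains $VU$ (so $G/VU$ is a nontrivial $p$-group) and keep $V$ irreducible for $U$, all while preserving the average-degree hypothesis.
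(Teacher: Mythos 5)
Your proposal correctly handles the outer layers of the induction (passing to $G/N$ for minimal normal subgroups, using Lemma \ref{averages} when $N \le G'$ and Theorem \ref{acd cent k} when $N \cap G' = 1$, and forcing $O_{p'}(G) = 1$), and your endgame case analysis of Lemma \ref{abelian 3}, Corollary \ref{abelian 4} and Theorem \ref{nonabelian 3} is essentially right. But there is a genuine gap exactly where you flag it: you assert that ``the main work of the proof is to push this further to $G = V \rtimes U$'' and then never carry out that reduction. This is not a routine step that can be waved at --- if $G$ properly contains $VU$, the quotient $G/VU$ is a nontrivial $p$-group, and there is no way to factor it out while preserving either the average-degree hypothesis or the minimality of the counterexample; nor is there any reason a minimal counterexample should have $V$ as its full Sylow $p$-subgroup. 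A proof that ends by naming its own crux as unresolved is not a proof.

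The missing idea is a Frattini argument, and it changes the shape of the base case. With $N$ a minimal normal $p$-subgroup and $K/N$ the normal $p$-complement of $G/N$, take a Hall $p$-complement $K_1$ of $K$ (which is then a Hall $p$-complement of $G$) and set $H = \norm G{K_1}$. The Frattini argument gives $G = N H$; minimality of $N$ gives $H \cap N = 1$ (else $K_1 \trianglelefteq G$ and we are done); and $H$ is shown to be core-free by taking a minimal normal subgroup $M$ of $G$ inside $H$, passing to $G/M$ (via Lemma \ref{averages} if $M \le G'$, via Theorem \ref{acd cent k} if $M \cap G' = 1$), and deriving the contradiction that $N$ centralizes $K_1$. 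Thus $N$ becomes a faithful irreducible module for $H = \norm G{K_1}$, \emph{not} for the Hall $p'$-subgroup $U$: the complement $H$ may well have order divisible by $p$, which is precisely why Theorem \ref{nonabelian 3} is stated for an arbitrary nonabelian $H$ and uses orbits of $p'$-length, and why the $p'$-ness of $|H|$ in the abelian case is deduced a posteriori from $|H| \mid |N|-1$ rather than assumed. So your intended reduction target is not only unproved but not the one the argument actually needs.
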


\begin{proof}
We work by induction on $|G|$.  If $G$ is abelian, then the conclusion is immediate.  Thus, we assume that $G$ is not abelian.  This implies that $G' > 1$, and so, $G'$ contains a minimal normal subgroup $N$ of $G$.  Since every character in $\irri {k,p'}G$ that is not in $\irri {k,p'}{G/N}$ is nonlinear, they all have degrees at least $2$, and since $\acdd {k,p'}G < 2$, we conclude via Lemma \ref{averages} that $\acdd {k,p'}{G/N} \le \acdd {k,p'}G$, so $G/N$ will satisfy the inductive hypothesis, so $G/N$ has a normal $p$-complement $K/N$.  If $N$ is a $p'$-group, then $K$ will be a normal $p$-complement of $G$, and the result is proved.

We now suppose that $N$ is a $p$-group.  Let $K_1$ be a Hall $p$-complement of $K$ and observe that $K$ is a Hall $p$-complement of $G$.  By the Frattini argument, we have $G = K N_G (K_1) = N K_1 N_G (K_1) = N N_G (K_1)$.  Let $H = N_G (K_1)$.  If $G = H$, then $K_1$ will be the normal $p$-complement for $G$, and we are done.  Thus, we may assume that $H < G$.  Notice that $H \cap N$ is normal in $H$ since $N$ is normal in $G$ and $H \cap N$ is normal in $N$ since $N$ is abelian.  Thus, $H \cap N$ is normal in $G$ and proper in $N$.  By the minimality of $N$, we see that $H \cap N = 1$.

Suppose $H$ contains a subgroup $M$ so that $M$ is minimal normal in $G$.  If $M \le G'$, then every character in $\irri {k,p'}G$ that is not in $\irri {k,p'}{G/M}$ has degree at least $2$ and $\acdd {k,p'}G < 2$; so using Lemma \ref{averages}, we see that $\acdd {k,p'}{G/M} \le \acdd {p'}G$.  On the other hand, if $M \cap G' = 1$, then by Theorem \ref{acd cent k} we have $\acdd {k,p'}{G/M} = \acdd {k,p'}{G}$.  In both cases, we see that $G/M$ satisfies the inductive hypothesis, and so, $G/M$ has a normal $p$-complement.  This implies that $K_1 M$ is normal in $G$.  Since $K_1M \le H$, we have $N \cap K_1 M = 1$, and so, $N$ centralizes $K_1 M$.  In particular, $N$ centralizes $K_1$ which is a contradiction since $N$ does not normalize $K_1$.  Therefore, $H$ must be core-free.

We now see that $N$ can be viewed as an irreducible, faithful module for $H$.  If $H$ is abelian, then note that $H$ must be cyclic and so every $H$-orbit on $N \setminus \{ 1 \}$ will have length $|H|$.  This implies that $|H|$ divides $|N| - 1$.  By It\^o's theorem (Theorem 6.15 of \cite{text}), every character in $\irri kG$ has $p'$-degree, and so, $\acdd {k,p'}G = \acdd kG$.  Applying Corollary \ref{abelian 4}, we know that $\acdd {k,p'}G = \acdd kG \ge 2(p+1)/(p+3)$, and this is a contradiction if we have Hypothesis 1.  If we have Hypothesis 2, then $|H|$ must be odd and since $k = Q$, this implies that $A^k (H) = H$.  Applying Lemma \ref{abelian 3}, we have $\acdd {Q,2'} (G) \ge 2$, a contradiction.  In Hypothesis 3, since $|G|$ is odd, we see that $|H|$ is odd.  If $|H| = 3$, then since $k$ contains primitive $3$rd roots of unity, we see that $A^k (H) = 1$, and so Lemma \ref{abelian 3} (2) yields $\acdd kG = 9/5$.  Notice that $7^a - 1$ must be even, so none of the other exceptions to Lemma \ref{abelian 3} can occur, and $\acdd kG \ge 2$, and so we have a contradiction.  In Hypotheses 4 and 5, we see that $|H|$ is odd, $p^a - 1$ is even and if $|H| = 3$ then $p^a \ge 10$ except when $p^a = 7$ and since $k$ does not contain primitive $3$rd roots of unity, we have $A^k (H) = H$.  In particular, none of the exceptions to Lemma \ref{abelian 3} occur, and we have $\acdd kG \ge 2$ which is a contradiction.  If $H$ is nonabelian, then $\acdd {k,p'}G \ge 2$ by Theorem \ref{nonabelian 3}.  In all cases, we have a contradiction, and hence, the theorem is proved.
\end{proof}

Theorem \ref{first} follows from Theorem \ref{main} (1) using $k = \CC$.  Obviously, Theorem \ref{second} (1) is Theorem \ref{main} (2).  The characters in $\irri {\QQ}G$ that do not lie in $\irri {\QQ,2'}G$ are all nonlinear, so they have degrees at least $2$, so Theorem \ref{second} follows from Theorem \ref{second} (1) combined with Lemma \ref{averages}.  Similarly, since every real valued linear character is rational valued, the characters in $\irri {\RR,2'}G$ and $\irri {\RR}G$ that do not lie in $\irri {\QQ}G$ have degrees at least $2$.  Thus, Theorem \ref{second} (3) and (4) are consequences of Theorem \ref{second} (1) and Lemma \ref{averages}.  Taking $k = \QQ_p$, we obtain Theorem \ref{third} (1) using Theorem \ref{main} (1), and since the characters in $\irr {\QQ_p}G$ that are not in $\irr {\QQ_p,p'}G$ are nonlinear, Theorem \ref{third} (2) is an application of Theorem \ref{third} (1) and Lemma \ref{averages}.  Finally, Theorem \ref{fourth} (1) with $k = \CC$ is Theorem \ref{main} (3), Theorem \ref{fourth} (2) with $k = \CC$ is Theorem \ref{main} (5), and Theorem \ref{fourth} (3) is Theorem \ref{main} (4) with $k = \QQ_p$.

\end{document}